\providecommand{\propositionname}{Proposition}
\providecommand{\theoremname}{Theorem}
\providecommand{\definitionname}{Definition}
\numberwithin{equation}{section}
\numberwithin{figure}{section}
\theoremstyle{plain}
\newtheorem{thm}{\protect\theoremname}[section]
\theoremstyle{plain}
\newtheorem{prop}[thm]{\protect\propositionname}
\theoremstyle{definition}
\DeclareMathOperator{\vspan}{span}
\DeclareMathOperator{\diag}{diag}
\DeclareMathOperator{\Real}{Re}
\begin{document}

\title[Oscillatory KPP system with three components]
{Two components is too simple: An example of oscillatory Fisher--KPP system with three components}

\author{Léo Girardin}

\thanks{This work was supported by a public grant as part of the Investissement d'avenir project,
reference ANR-11-LABX-0056-LMH, LabEx LMH. This work has been carried out in the framework of 
the NONLOCAL project (ANR-14-CE25-0013) funded by the French National Research Agency (ANR)}
\address{Laboratoire de Mathématiques d'Orsay, Université Paris
Sud, CNRS, Université Paris-Saclay, 91405 Orsay Cedex, France}
\email{leo.girardin@math.u-psud.fr}

\begin{abstract}
In a recent paper by Cantrell, Cosner and Yu \cite{Cantrell_Cosner_Yu_2018},
two-component KPP systems with competition of Lotka--Volterra type
were analyzed and their long-time behavior largely settled. In particular,
the authors established that any constant positive steady state,
if unique, is necessarily globally attractive. In the present paper,
we give an explicit and biologically very natural example of oscillatory three-component
system. Using elementary techniques or pre-established theorems, we show that it has a unique 
constant positive steady state with two-dimensional unstable 
manifold, a stable limit cycle, a predator--prey structure near the steady state,
periodic wave trains and point-to-periodic rapid traveling waves. 
Numerically, we also show the existence of pulsating fronts and propagating terraces.
\end{abstract}
\keywords{KPP nonlinearity, reaction--diffusion system, Hopf bifurcation,
spreading phenomena.}
\subjclass[2000]{35K40, 35K57, 37G10, 92D25.}

\maketitle
\section{Introduction}

This paper is concerned with the long-time properties of the following
reaction--diffusion system:
\begin{equation}
    \tag{$\textup{KPP}_\mu$}
\left\{ \begin{matrix}\partial_{t}u_{1}-\Delta u_{1}= & u_{1}+\mu\left(-2u_{1}+u_{2}+u_{3}\right)-\frac{1}{10}\left(u_{1}+8u_{2}+u_{3}\right)u_{1}\,\\
\partial_{t}u_{2}-\Delta u_{2}= & u_{2}+\mu\left(+u_{1}-2u_{2}+u_{3}\right)-\frac{1}{10}\left(u_{1}+u_{2}+8u_{3}\right)u_{2}\,\\
\partial_{t}u_{3}-\Delta u_{3}= & u_{3}+\mu\left(+u_{1}+u_{2}-2u_{3}\right)-\frac{1}{10}\left(8u_{1}+u_{2}+u_{3}\right)u_{3},
\end{matrix}\right.\label{sys:1}
\end{equation}
written in vector form as:
\[
\partial_{t}\mathbf{u}-\Delta\mathbf{u}=\mathbf{u}+\mu\mathbf{M}\mathbf{u}-\left(\mathbf{C}\mathbf{u}\right)\circ\mathbf{u},
\]
 where $\mu$ is a positive constant,
\[
\mathbf{M}=\left(\begin{matrix}-2 & 1 & 1\\
1 & -2 & 1\\
1 & 1 & -2
\end{matrix}\right),\quad\mathbf{C}=\frac{1}{10}\left(\begin{matrix}1 & 8 & 1\\
1 & 1 & 8\\
8 & 1 & 1
\end{matrix}\right),
\]
and $\circ$ denotes the Hadamard product (component-by-component
product) of two vectors. 

This system (\ref{sys:1}) is a particular case of KPP system, as
defined by the author in \cite{Girardin_2016_2} and subsequently
analyzed in \cite{Girardin_2017}. Let us remind briefly here that
this name comes from the fact that the above reaction term is strongly
reminiscent of the scalar logistic term $u\left(1-u\right)$ and leads
to very similar conclusions regarding extinction, persistence, traveling
waves and spreading speed.

From the biological point of view, (\ref{sys:1}) can for instance
model a structured population with three coexisting phenotypes subjected
to spatial dispersal, phenotypical changes and competition for resources.
As explained by Cantrell, Cosner and Yu \cite{Cantrell_Cosner_Yu_2018},
the phenotypical changes can come from behavioral switching, phenotypic
plasticity or Darwinian evolution, for instance. 

The paper is organized as follows. In the remaining of this introductive section, 
we present and comment our results. In Section 2, we prove our main analytical result. In
Section 3, we present our numerical findings. In the appendix, we give elementary
proofs of related new results on periodic wave trains (\appref{WT}) or KPP systems (\appref{ES} and \appref{USS}).

\subsection{Main result}

In their paper, Cantrell, Cosner and Yu \cite{Cantrell_Cosner_Yu_2018}
studied the KPP system with competition of Lotka--Volterra type
\[
    \partial_{t}\mathbf{u}-\diag\left(\mathbf{d}\right)\Delta\mathbf{u}=\mathbf{L}\mathbf{u}-\mathbf{C}\mathbf{u}\circ\mathbf{u}
\]
with only two components but in full generality with respect to the parameters.
Here we recall that the minimal KPP assumptions are the positivity \footnote{In the whole paper, positive vectors are vectors with
positive components and nonnegative, negative and nonpositive vectors are defined analogously.} 
of $\mathbf{d}$ and $\mathbf{C}$ as well as the essential nonnegativity \footnote{Off-diagonal nonnegativity.} of $\mathbf{L}$, 
its irreducibility and the positivity of its Perron--Frobenius eigenvalue. 
We also recall that a competition term $\mathbf{c}\left( \mathbf{u} \right)\circ\mathbf{u}$
is referred to as a Lotka--Volterra competition term if the vector field $\mathbf{c}$
is linear, namely $\mathbf{c}\left( \mathbf{u} \right)=\mathbf{C}\mathbf{u}$.

Cantrell, Cosner and Yu obtained an almost complete characterization of the
long-time asymptotics in bounded domains. In particular, they proved 
that any constant positive steady state, if unique, is globally
attractive under Neumann boundary conditions. Hence it is an important
step forward regarding the general study of two-component KPP systems
with Lotka--Volterra competition, which have been studied
by several authors in the past few years 
\cite{Elliott_Cornel,Girardin_2017,Griette_Raoul,Morris_Borger_Crooks}. 
We will show briefly in \propref{2_compo_convergence} how their 
results and arguments of proof can be applied to the unbounded setting.

On the contrary, for the three-component system (\ref{sys:1}), a qualitatively completely
different result will be proved in the forthcoming pages. Before giving the statement, 
we point out that the parameters of the system are obviously normalized in such a way 
that, for any value of $\mu$, $\mathbf{1}=\left(1,1,1\right)^{T}$ is a positive constant steady state.
Additionally, we define $\mu_{\textup{H}}=\frac{7}{60}$,
$\mu_{-}=\frac{1}{10}$ and $\mu_{+}=\frac{8}{10}$. These values satisfy
\[
    0<\mu_{-}<\mu_{\textup{H}}<\mu_{+}<1.
\]
In the statement below, $\left( \mathbf{e}_i \right)_{i\in\left\{ 1,2,3 \right\}}$ denotes the canonical 
basis of $\mathbb{R}^3$. For notation convenience, indices are defined modulo 
$3$ (\textit{i.e.} $\mathbf{e}_4=\mathbf{e}_1$, $\mathbf{e}_5=\mathbf{e}_2$, etc.).

\begin{thm}
\label{thm:1}
The diffusionless system
\begin{equation}
    \tag{$\textup{KPP}^0_{\mu}$}
\dot{\mathbf{u}}=\mathbf{u}+\mu\mathbf{M}\mathbf{u}-\left(\mathbf{C}\mathbf{u}\right)\circ\mathbf{u}
\label{sys:2}
\end{equation}
satisfies the following properties.
\begin{enumerate}
    \item $\mathbf{1}$ is the unique positive steady state. 
\item If $\mu>\mu_{\textup{H}}$, $\mathbf{1}$ is locally asymptotically stable,
    but at $\mu=\mu_{\textup{H}}$, it undergoes a supercritical Hopf bifurcation
leading to the birth of a unique and locally asymptotically stable
limit cycle $\mathsf{C}_{\mu}$. Using $\mu$ as a parameter, there exists a family of positive 
limit cycles
$\left( \mathsf{C}_\mu \right)_{\mu\in\left( 0,\mu_{\textup{H}} \right)}$
and any such family converges, in the sense of the Hausdorff distance, as $\mu\to 0$, to
\[
    \mathsf{C}_0 =\bigcup_{i\in\{1,2,3\}}\{10\mathbf{e}_i\}\cup\mathsf{H}_i,
\]
where $\mathsf{H}_i$ is, for (\ref{sys:2}) with $\mu=0$, the unique heteroclinic connection 
between $10\mathbf{e}_i$ and $10\mathbf{e}_{i+1}$, which lies in $\mathbf{e}_{i+2}^{\perp}$.
Furthermore, any limit cycle $\mathsf{C}_\mu$ satisfies
\[
    \mathsf{C}_\mu\subset\left\{ \mathbf{v}\in\left( 0,+\infty \right)^3\ |\ 1\leq\frac{v_1+v_2+v_3}{3}\leq\frac{10}{3} \right\}
\]
and is rotating clockwise around $\vspan\left( \mathbf{1} \right)$ if seen from $\mathbf{0}$.
\item Let $\mathbf{v}\geq\mathbf{0}$. The reaction term at $\mathbf{v}$ is cooperative
    if and only if 
    \[
	\mathbf{v}\in\left[ 0,\frac{\mu}{\mu_+} \right]^3
    \]
    and competitive if and only if 
    \[
	\mathbf{v}\in\left[ \frac{\mu}{\mu_-},+\infty\right)^3.
    \]
    If $\mathbf{v}\in\left( \frac{\mu}{\mu_+},\frac{\mu}{\mu_-} \right)^3$, the off-diagonal
    entries of the linearized reaction term at $\mathbf{v}$ have the following signs:
	\[
	\left(\begin{matrix}\bullet & - & +\\
	+ & \bullet & -\\
	- & + & \bullet
	\end{matrix}\right).
	\]
	This is in particular the case if $\mathbf{v}=\mathbf{1}$ and 
	$\mu\in\left( \mu_-,\mu_+ \right)$.
\end{enumerate}
\end{thm}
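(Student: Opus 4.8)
The plan rests on one structural remark: the diffusionless system is invariant under the cyclic permutation $(u_1,u_2,u_3)\mapsto(u_2,u_3,u_1)$ because $\mathbf M$ and $\mathbf C$ are circulant, so both are simultaneously diagonalized over $\mathbb C$ by $\mathbf 1$ and by the complex modes $(1,\omega,\omega^2)^T$, $(1,\omega^2,\omega)^T$ with $\omega=e^{2i\pi/3}$. I would settle item~(3) first, as it is a pure linearization. With $\mathbf f(\mathbf u)=\mathbf u+\mu\mathbf M\mathbf u-(\mathbf C\mathbf u)\circ\mathbf u$, a one-line computation gives the off-diagonal entries of the linearized reaction at $\mathbf v$ as $\partial_{u_j}f_i(\mathbf v)=\mu-C_{ij}v_i$ for $i\neq j$. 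Since every off-diagonal entry of $\mathbf C$ is either $\mu_+=\tfrac8{10}$ or $\mu_-=\tfrac1{10}$, such an entry is nonnegative exactly when $v_i\leq\mu/\mu_+$ and nonpositive exactly when $v_i\geq\mu/\mu_-$; imposing this over the three rows produces the two cubes, and on the intermediate cube reading off which value of $C_{ij}$ sits in each slot reproduces the displayed cyclic sign pattern. The case $\mathbf v=\mathbf 1$ is then the condition $\mu_-<\mu<\mu_+$.

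For item~(1) I would turn the same circulant bookkeeping into a one-dimensional monotonicity argument. Dividing the $i$-th steady-state equation by $v_i>0$ and using $(\mathbf M\mathbf v)_i=S-3v_i$ and $(\mathbf C\mathbf v)_i=\tfrac1{10}(S+7v_{i+1})$, where $S=v_1+v_2+v_3$, the system becomes the cyclic relation $v_{i+1}=g_S(v_i)$ for the single scalar map $g_S(v)=\tfrac{10}7\bigl(1-3\mu-\tfrac S{10}+\tfrac{\mu S}{v}\bigr)$. Since $\mu S>0$, $g_S$ is strictly decreasing on $(0,+\infty)$, so $g_S\circ g_S\circ g_S$ is strictly decreasing and has at most one positive fixed point; as $v_1$, $v_2$, $v_3$ are all fixed points of this triple composition, they coincide, whence any positive steady state is $v\mathbf 1$, and the scalar equation forces $v=1$. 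One could instead quote the general uniqueness statement of \appref{USS}.

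Item~(2) carries the real content, and I would split it into a local and a global half. The local half follows from the Jacobian at $\mathbf 1$: since $\mathbf C\mathbf 1=\mathbf 1$ it collapses to the circulant matrix $\mathbf J=\mu\mathbf M-\mathbf C$, whose spectrum is read from the Fourier basis as $-1$ on $\mathbf 1$ and $-3\mu+\tfrac7{20}\mp i\tfrac{7\sqrt3}{20}$ on the complex modes. The conjugate pair crosses the imaginary axis exactly at $-3\mu+\tfrac7{20}=0$, i.e.\ $\mu=\mu_{\textup H}=\tfrac7{60}$, with transversal speed $-3\neq0$ and nonzero frequency $\tfrac{7\sqrt3}{20}$; this gives local asymptotic stability of $\mathbf 1$ for $\mu>\mu_{\textup H}$ (and, for $\mu<\mu_{\textup H}$, a spiral source on a two-dimensional unstable manifold) and a genuine Hopf bifurcation at $\mu_{\textup H}$. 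Showing the bifurcation is supercritical---so that a stable cycle is born on the unstable side $\mu<\mu_{\textup H}$---requires computing the first Lyapunov coefficient on the two-dimensional center manifold and checking its sign. This is the one genuinely heavy computation; I would perform it in the eigenbasis of $\mathbf J$, where the cubic normal-form coefficient can be assembled directly from the purely quadratic nonlinearity $-(\mathbf C\mathbf u)\circ\mathbf u$.

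The global geometry I would control by softer means. Summing the three equations annihilates the mixing term, because the columns of $\mathbf M$ sum to zero, and yields $\dot S=S-\tfrac1{10}(S^2+7p)$ with $p=v_1v_2+v_2v_3+v_3v_1$; the bounds $0<p\leq S^2/3$ then give $\dot S<0$ for $S>10$ and $\dot S>0$ for $S<3$, so every periodic orbit lies in the slab $3\leq S\leq10$ asserted in~(2), while $u_i=0$ forcing $\dot u_i=\mu(u_{i+1}+u_{i+2})>0$ makes the open octant forward invariant with repelling boundary. The rotation sense is fixed by the sign of the imaginary part of the Hopf eigenvalue on $(1,\omega,\omega^2)^T$, equivalently by the cyclic sign pattern of~(3), orienting the flow on $\mathbf 1^\perp$ in the order $\mathbf e_1\to\mathbf e_2\to\mathbf e_3$, the clockwise sense seen from $\mathbf 0$. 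Finally, the hardest piece is the singular limit $\mu\to0$: at $\mu=0$ the system is a rock--paper--scissors Lotka--Volterra competition of May--Leonard type whose vertices $10\mathbf e_i$ and in-plane connections assemble into the heteroclinic cycle $\mathsf C_0$, and for $0<\mu<\mu_{\textup H}$ the repelling interior equilibrium together with the repelling boundary forces a periodic orbit that I would track by a Poincaré-return/singular-perturbation analysis in a tube around $\mathsf C_0$, proving its Hausdorff distance to $\mathsf C_0$ vanishes as $\mu\to0$. I expect the main obstacle here to be the non-hyperbolicity of the vertices at $\mu=0$---each Jacobian there has a zero eigenvalue, the neutral invasion rate---so that the passage near $10\mathbf e_i$ must be estimated directly rather than through a hyperbolic saddle; this is the crux of the theorem.
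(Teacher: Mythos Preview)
Your treatment of items (3) and the local half of (2) matches the paper's approach. Your argument for the slab $3\leq S\leq 10$ via $\dot S=S-\tfrac{1}{10}(S^2+7p)$ is the same computation as the paper's, just written in the sum variable rather than in the eigenbasis.

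Your proof of item (1) is genuinely different and cleaner than the paper's. The paper writes $\mathbf v=\alpha\mathbf 1+\beta\mathbf z+\overline{\beta\mathbf z}$, derives a quadratic in $\alpha$ with coefficients depending on $\mu$, and rules out $\beta\neq 0$ via a discriminant and sign analysis split into the ranges $\mu\in(0,\tfrac{4}{19})$ and $\mu\geq\tfrac{4}{19}$. Your observation that the steady-state system is $v_{i+1}=g_S(v_i)$ with $g_S$ strictly decreasing, hence $g_S^{\,3}$ strictly decreasing with at most one fixed point, dispatches the question in two lines and with no case split. (One small correction: \appref{USS} does not contain a uniqueness statement---it only shows that the linearization at an unstable positive steady state is not essentially nonnegative---so your parenthetical remark should be dropped.)

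There is, however, a real gap in the global half of (2). Your rotation argument (``the rotation sense is fixed by the sign of the imaginary part of the Hopf eigenvalue'' or ``by the cyclic sign pattern of (3)'') is purely local: the eigenvalue controls rotation only near $\mathbf 1$, and the sign pattern of (3) holds only on the cube $(\mu/\mu_+,\mu/\mu_-)^3$, which the limit cycle need not stay inside (and certainly does not as $\mu\to 0$). The paper instead proves a \emph{global} transversality statement: for every $\mathbf v=\alpha\mathbf 1+2\mathrm{Re}(\beta\mathbf z)$ in the positive octant with $\beta\neq 0$, the reaction field dotted with the tangent $2\mathrm{Re}(i\beta\mathbf z)$ has a constant sign. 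This uses the precise relation between $|\beta|$, $\arg\beta$ and the octant boundary (the estimate $|\beta|\leq\sqrt3\,\alpha$ is not enough). That global rotation is what guarantees every limit cycle encloses $\mathrm{span}(\mathbf 1)$, and this enclosure is exactly what forces the $\mu\to 0$ Hausdorff limit to be the \emph{full} heteroclinic triangle rather than a single vertex or edge.

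Relatedly, your proposed route to the singular limit---singular-perturbation tracking in a tube around $\mathsf C_0$, with a direct passage estimate near the non-hyperbolic vertices---is much harder than what is needed. The paper simply uses Hausdorff compactness of $(\mathsf C_\mu)$ inside the slab, extracts a limit set $\mathsf C$ invariant for the $\mu=0$ May--Leonard system, invokes the known classification of that system's $\omega$-limit sets (no periodic orbits; only the equilibria and the three planar heteroclinic arcs), and then uses the enclosure property above to rule out proper subsets of $\mathsf C_0$. No analysis of passage times near the corners is required. Also note that your phrase ``repelling interior equilibrium'' is inaccurate in $\mathbb R^3$ (the eigenvalue $-1$ along $\mathbf 1$ makes $\mathbf 1$ a saddle); the paper obtains a periodic orbit for each $\mu\in(0,\mu_{\textup H})$ by applying Poincar\'e--Bendixson \emph{on the two-dimensional unstable manifold of $\mathbf 1$}, where $\mathbf 1$ is indeed a source.
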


We will illustrate numerically that the limit cycle $\mathsf{C}_{\mu}$
seems to be in fact globally attractive (with respect to initial conditions
that are not in the basin of attraction of $\mathbf{0}$ or $\mathbf{1}$,
namely almost all of them), and therefore also unique. However,
we did not manage to prove the global attractivity or the uniqueness.

\subsection{Discussion on \thmref{1}}
Although the third property follows from a direct differentiation, it is qualitatively
very meaningful. On one hand, in the cube $\left( \frac{10\mu}{8},10\mu \right)^3$, the system has
the structure of a cyclic predator--prey system (rock--paper--scissor-like).
On the other hand, a consequence of Cantrell--Cosner--Yu 
\cite[Propositions 2.5 and 3.1]{Cantrell_Cosner_Yu_2018}
is that two-component KPP systems with Lotka--Volterra competition are
competitive in the neighborhood of any constant positive saddle.
The case $\mu\in\left(\mu_-,\mu_{\textup{H}}\right)$ of \thmref{1} above
proves that this property fails with three components.
For the sake of completeness, we will prove in \propref{unstable_steady_state} 
what seems to be the optimal result for an arbitrary number of components: 
at an unstable constant positive steady state, the reaction term is not cooperative. 

Notice that, changing in (\ref{sys:2}) $\mu$ into $\frac{1}{\mu}$
and normalizing appropriately the time variable, we obtain the system 
\[
    \partial_{t}\mathbf{u}-\mathbf{M}\mathbf{u}=\mu\left(\mathbf{u}-\left(\mathbf{C}\mathbf{u}\right)\circ\mathbf{u}\right),
\]
which is obviously similar (by finite difference approximations and Riemann sum approximations, see \cite[Section 1.5]{Girardin_2016_2}) 
to the following nonlocal KPP equation:
\[
\partial_{t}u-\partial_{yy}u=\mu u\left(1-\phi\star_{y}u\right).
\]
For this equation, it is now well-known that as $\mu$ increases, the steady state
$1$ is dynamically destabilized (we refer for instance to Berestycki--Nadin--Perthame--Ryzhik 
\cite{Berestycki_Nadin_Perthame_Ryzhik}, Faye--Holzer \cite{Faye_Holzer_14} and 
Nadin--Perthame--Tang \cite{Nadin_Perthame_Tang}). 
In this context, this property is usually understood as a form of Turing instability.
In particular, Nadin--Perthame--Tang \cite{Nadin_Perthame_Tang} showed numerically how this Turing
instability can lead to interesting spreading phenomena, where the classical traveling waves
connecting $0$ to $1$ are replaced by more sophisticated solutions.

\subsection{Discussion and numerical results on the spatial structure}
Of course, the presence in (\ref{sys:1}) of a third variable $x$
makes the system (\ref{sys:1}) qualitatively different from the nonlocal KPP equation. In fact,
as explained by the author in \cite{Girardin_2016_2}, 
(\ref{sys:1}) is more reminiscent of the cane toad equation with nonlocal competition and
local or nonlocal mutations 
\cite{Alfaro_Coville_Raoul,Arnold_Desvill,Benichou_Calvez,Bouin_Calvez_2014,Bouin_Calvez_2,Bouin_Henderso,PrevostPhD}.
For these equations, what happens in the wake of an invasion front is still poorly understood. 
Our results and numerical findings are, in this regard, quite interesting. 

Taking profit of the Hopf bifurcation at $\mu=\mu_{\textup{H}}$, we can apply 
a theorem of Kopell--Howard \cite{Kopell_Howard_1973} (we also refer to Murray 
\cite[Chapter 1, Section 1.7]{Murray_II}) and immediately obtain the following
result.

\begin{prop}\label{prop:wave_train}
    Assume that (\ref{sys:1}) is set in the spatial domain $\mathbb{R}^n$ with $n\in\mathbb{N}$. 

    If $\mu<\mu_{\textup{H}}$ and $\mathsf{C}_\mu$ is locally asymptotically stable,
    then for any $e\in\mathbb{S}^{n-1}$,
(\ref{sys:1}) admits a continuous one-parameter
family of traveling plane wave train solutions of the form 
\[
    \mathbf{u}:\left( t,x \right)\mapsto \mathbf{p}_{\gamma}\left( \kappa_\gamma x\cdot e-\sigma_{\gamma}t \right)
\]
where $\gamma\in\left[ 0,\gamma_{\textup{s}} \right]\cup\left[ \gamma_{\textup{l}},1 \right]$ is the parameter, 
$0<\gamma_{\textup{s}}\leq\gamma_{\textup{l}}<1$,
$\kappa_\gamma\in\mathbb{R}$, $\sigma_\gamma\in\mathbb{R}$ and $\mathbf{p}_\gamma$ is 
positive and periodic. 
Without loss of generality, $\gamma$ can be understood as an amplitude parameter:
\begin{itemize}
    \item the image of $\mathbf{p}_0$ is $\mathbf{1}$;
    \item $\kappa_1=0$ and the image and period $\sigma_1^{-1}$ of $\mathbf{p}_1$ are respectively
	the limit cycle $\mathsf{C}_\mu$ and its associated period;
    \item $\gamma\mapsto\mathbf{p}_\gamma$ is increasing in the sense that the image of 
	\[
	    \left(\gamma,\xi\right)\in\left[ 0,\gamma_1 \right]\times\mathbb{R}\mapsto\mathbf{p}_{\gamma}\left( \xi \right)
	\]
	is strictly included in that of 
	\[
	    \left( \gamma,\xi \right)\in\left[ 0,\gamma_2 \right]\times\mathbb{R}\mapsto\mathbf{p}_{\gamma}\left( \xi \right)
	\]
	provided $\gamma_1<\gamma_2$.
\end{itemize}

Furthermore, there exists $\overline{\gamma}\in\left(0,\gamma_{\textup{s}}\right]$ such that 
all wave trains of 
amplitude $\gamma\in\left[0,\overline{\gamma}\right)$ are unstable with respect to 
compactly supported, bounded perturbations.
\end{prop}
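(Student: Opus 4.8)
The plan is to reduce the construction of plane wave trains to a single scalar-variable problem and then to quote the bifurcation theorem of Kopell and Howard \cite{Kopell_Howard_1973}. Abbreviating the reaction term by $\mathbf{F}(\mathbf{u})=\mathbf{u}+\mu\mathbf{M}\mathbf{u}-(\mathbf{C}\mathbf{u})\circ\mathbf{u}$ and inserting the ansatz $\mathbf{u}(t,x)=\mathbf{p}(\xi)$, $\xi=\kappa x\cdot e-\sigma t$, into (\ref{sys:1}), the two facts that $e\in\mathbb{S}^{n-1}$ and that the Laplacian of a plane wave in direction $e$ equals $\kappa^{2}$ times its second derivative along $\xi$ turn the PDE into the autonomous ODE
\[
\kappa^{2}\mathbf{p}''+\sigma\mathbf{p}'+\mathbf{F}(\mathbf{p})=\mathbf{0}.
\]
This problem depends on $n$ and on $e$ only through $\kappa^{2}$, so it suffices to build a single family $(\kappa_{\gamma},\sigma_{\gamma},\mathbf{p}_{\gamma})$ of periodic solutions, after which the conclusion holds verbatim for every direction $e$ and every dimension $n$. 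Such periodic solutions are exactly the wave trains sought, and the spatially homogeneous case $\kappa=0$ degenerates the ODE into $\sigma\mathbf{p}'+\mathbf{F}(\mathbf{p})=\mathbf{0}$, that is into the diffusionless system (\ref{sys:2}) up to the constant time rescaling $\sigma$; its periodic orbits are precisely the limit cycles of \thmref{1}. This identifies the endpoint $\gamma=1$, where $\kappa_{1}=0$ and $\mathbf{p}_{1}$ runs over $\mathsf{C}_{\mu}$ with period $\sigma_{1}^{-1}$.

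I would then apply \cite{Kopell_Howard_1973} in its two regimes, both fed by \thmref{1}. At the small-amplitude end I would perform a Lyapunov--Schmidt reduction of the wave-train ODE at the constant solution $\mathbf{p}\equiv\mathbf{1}$: seeking a bifurcating mode $e^{i\xi}\mathbf{w}$ forces $\kappa^{2}-i\sigma$ to be an eigenvalue of $D\mathbf{F}(\mathbf{1})$, and for $\mu<\mu_{\textup{H}}$ the conjugate pair of eigenvalues with positive real part left over from the Hopf bifurcation pins down the limiting $(\kappa_{0},\sigma_{0})$ and produces a branch $\gamma\in[0,\gamma_{\textup{s}}]$ of small periodic profiles collapsing onto $\mathbf{1}$. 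At the large-amplitude end I would instead continue the $\kappa=0$ solution $\mathsf{C}_{\mu}$ in the wavenumber: here the hypothesis that $\mathsf{C}_{\mu}$ be locally asymptotically stable is exactly what ensures that $1$ is a simple Floquet multiplier of the orbit, so that the implicit function theorem applies to the associated period map and yields a branch $\gamma\in[\gamma_{\textup{l}},1]$ of wave trains with small nonzero $\kappa_{\gamma}$. The monotone nesting of the images and the endpoint normalisations are then read off the amplitude $\gamma$ in each reduction; since the two branches come from two distinct perturbative arguments they need not meet, which is exactly the content of the possibly nonempty gap $(\gamma_{\textup{s}},\gamma_{\textup{l}})$.

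For the final instability statement I would argue by spectral perturbation from the unstable base state. For $\mu<\mu_{\textup{H}}$ the matrix $D\mathbf{F}(\mathbf{1})$ carries a pair $\alpha\pm i\omega$ with $\alpha>0$, so the dispersion relation of the PDE linearised at $\mathbf{1}$, whose branches are the eigenvalues of $D\mathbf{F}(\mathbf{1})-|k|^{2}\mathbf{I}$, satisfies $\sup_{k}\Real\,\lambda(k)>0$, this supremum being attained at $k=0$. Since the most unstable mode sits at zero wavenumber and hence has zero group velocity, $\mathbf{1}$ is absolutely unstable and a compactly supported perturbation, whose Fourier transform does not vanish near $k=0$, grows pointwise. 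The linearisation about a small-amplitude wave train $\mathbf{p}_{\gamma}$ is a differential operator with periodic coefficients converging to the constant coefficients of the $\mathbf{1}$-linearisation as $\gamma\to0$, so by upper semicontinuity of the spectrum it still reaches into a half-plane $\{\Real\,\lambda\geq\delta\}$ for some $\delta>0$ independent of $\gamma$, and the same absolute-instability conclusion survives for all $\gamma$ below some threshold $\overline{\gamma}\in(0,\gamma_{\textup{s}}]$.

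The genuinely routine parts are the ODE reduction and the citation of \cite{Kopell_Howard_1973}; I expect the real difficulty to be the instability claim, and specifically the upgrade from spectral instability to instability against \emph{compactly supported} perturbations, uniformly along the branch. The delicate point is that the linearisation about a wave train has purely essential (Bloch) spectrum on $\mathbb{R}^{n}$ and no honest growing eigenfunction, so one cannot simply display an unstable mode: one must track the Bloch spectrum as $\gamma\to0$, check that its positive-real-part portion persists, and confirm that the instability is absolute rather than merely convective so that localised data still grow. Turning the semicontinuity argument into a single threshold $\overline{\gamma}$ valid along the whole small-amplitude branch is where the care is needed.
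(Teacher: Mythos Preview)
Your approach is essentially the same as the paper's: reduce to the scalar ODE $\kappa^{2}\mathbf{p}''+\sigma\mathbf{p}'+\mathbf{F}(\mathbf{p})=\mathbf{0}$ and invoke Kopell--Howard \cite{Kopell_Howard_1973} at both ends (small amplitude near $\mathbf{1}$, large amplitude near $\mathsf{C}_{\mu}$). The paper in fact gives no proof beyond the sentence ``we can apply a theorem of Kopell--Howard and immediately obtain the following result'', so your write-up is already more explicit than what appears there.

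The one difference worth flagging is the instability clause. The paper treats it as part of the Kopell--Howard package (their 1973 paper contains the instability of small-amplitude wave trains as a consequence of the same bifurcation analysis), whereas you sketch an independent argument via absolute instability of $\mathbf{1}$ and semicontinuity of the Bloch spectrum. Your sketch is sound in this setting---with the identity diffusion, the dispersion branches are $\lambda_{j}-|k|^{2}$, the critical point is at $k=0$, and the absolute/essential spectra coincide---but it is more work than needed: you can simply cite Kopell--Howard for the instability as well, since their Theorem covers it directly. Your caution about upgrading spectral instability to growth under compactly supported data is well placed in general, but here it is already handled in the reference.
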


Beware that the Kopell--Howard theorem only shows that there are wave trains close to 
$\mathbf{1}$ (small amplitude $0\leq\gamma\leq\gamma_{\textup{s}}$) and close to 
$\mathsf{C}_\mu$ (large amplitude $\gamma_{\textup{l}}\leq\gamma\leq 1$).
It is unclear, even numerically, whether a continuum of wave trains exists 
(\textit{i.e.}, the equality $\gamma_{\textup{s}}=\gamma_{\textup{l}}$ is unclear).

The nonlinear stability of a wave train of amplitude close to $1$ is a delicate question, as 
established by Kopell--Howard \cite{Kopell_Howard_1973} and subsequently confirmed by
Maginu \cite{Maginu_1978,Maginu_1981}. Nevertheless, simply thanks to the fact that the 
diffusion matrix is the identity, the stability of the limit cycle extends in the following way.

\begin{prop}\label{prop:stability_wave_trains}
    If $\mu<\mu_{\textup{H}}$ and $\mathsf{C}_\mu$ is locally asymptotically stable with
    respect to (\ref{sys:2}), then 
    there exists $\underline{\gamma}\in\left[\gamma_{\textup{l}},1\right)$ such that all
    wave trains of amplitude $\gamma\in\left(\underline{\gamma},1\right]$ are 
    marginally stable in linearized criterion.
\end{prop}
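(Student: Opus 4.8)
The plan is to reduce the question to the local asymptotic stability of $\mathsf{C}_\mu$, which is assumed, exploiting in an essential way that the diffusion matrix is the identity. I would treat first the endpoint $\gamma=1$ and then propagate the conclusion to a left-neighborhood by continuity. Since $\kappa_1=0$, the wave train of amplitude $1$ is exactly the spatially homogeneous time-periodic solution $t\mapsto\mathbf{p}_1(-\sigma_1 t)$, that is, the limit cycle $\mathsf{C}_\mu$ regarded as a solution of (\ref{sys:1}) that is constant in $x$. Linearizing (\ref{sys:1}) about it, with $\mathbf{u}=\mathbf{p}_1(-\sigma_1 t)+\mathbf{w}$ and $\mathbf{A}(t)$ the $\sigma_1^{-1}$-periodic Jacobian of the reaction term along the orbit, the perturbation obeys $\partial_t\mathbf{w}-\Delta\mathbf{w}=\mathbf{A}(t)\mathbf{w}$.

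The key computation uses that the diffusion matrix is the identity. Taking the spatial Fourier transform decouples the wave numbers: for each $k$, the mode $\hat{\mathbf{w}}(t,k)$ solves the linear $\sigma_1^{-1}$-periodic ODE $\partial_t\hat{\mathbf{w}}=(\mathbf{A}(t)-|k|^2\mathbf{I})\hat{\mathbf{w}}$. Because $-|k|^2\mathbf{I}$ is a scalar multiple of the identity it commutes with the flow, so the monodromy over one period factorizes as $e^{-|k|^2\sigma_1^{-1}}\mathbf{M}_0$, where $\mathbf{M}_0$ is the monodromy matrix of $\mathsf{C}_\mu$ for the diffusionless system (\ref{sys:2}). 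The Floquet multipliers at wave number $k$ are therefore $e^{-|k|^2\sigma_1^{-1}}$ times those of $\mathsf{C}_\mu$. Local asymptotic stability of $\mathsf{C}_\mu$ means its multipliers are the trivial one, equal to $1$, together with multipliers of modulus strictly below $1$; since $e^{-|k|^2\sigma_1^{-1}}\leq 1$ with equality only at $k=0$, every multiplier is then pushed strictly inside the unit disk apart from the single neutral multiplier $1$ carried by the $k=0$ temporal-phase mode. Equivalently, the spectrum of the linearization is contained in the closed left half-plane and meets the imaginary axis only at the origin, through that neutral mode. This is exactly marginal stability in the linearized criterion at $\gamma=1$.

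It then remains to extend the conclusion to amplitudes just below $1$. As $\gamma\to 1$ one has $\kappa_\gamma\to\kappa_1=0$, so the profiles $\mathbf{p}_\gamma$ converge to the homogeneous oscillation and the corresponding Bloch/Floquet operators converge to the one treated above. The neutral multiplier $1$ is forced for every wave train by translation invariance, hence persists; all the remaining spectrum is strictly stable at $\gamma=1$, so by continuous dependence on $\gamma$ it remains in the closed unit disk on some interval $(\underline{\gamma},1]$, giving the announced $\underline{\gamma}\in[\gamma_{\textup{l}},1)$.

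The hard part is precisely this last step, and the reason it is not a routine perturbation of isolated eigenvalues is that the spatial period $2\pi/\kappa_\gamma$ of the wave train diverges as $\gamma\to 1$: the Bloch family degenerates and the limit is singular. What must be controlled is the essential spectral curve emanating from the origin, uniformly as the wavelength grows, i.e.\ one has to verify that the sideband (Eckhaus-type) modes branching off the neutral multiplier keep the correct, stabilizing curvature and do not cross into $\Real>0$. I expect that the explicit factorized structure obtained at $\gamma=1$, combined with an expansion of the spectral curve in the Bloch parameter, is what makes this control possible and yields the marginal stability on a genuine left-neighborhood of $1$.
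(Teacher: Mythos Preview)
Your treatment of the endpoint $\gamma=1$ is exactly the paper's argument: the identity diffusion makes the Fourier modes decouple, and the Floquet exponents of the $k$-mode are those of $\mathsf{C}_\mu$ shifted by $-|k|^2$. In the paper this computation is phrased as verifying Maginu's notion of \emph{strong stability} of the spatially homogeneous limit cycle, i.e.\ nonpositivity of all Floquet exponents of $\dot{\mathbf{u}}=(-\omega^2\mathbf{I}+\mathbf{A}(t))\mathbf{u}$ for every $\omega\in\mathbb{R}$; the fundamental-solution identity $\mathbf{U}_\omega(t)=\textup{e}^{-\omega^2 t}\mathbf{U}_0(t)$ is the same factorization you obtained.

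Where you diverge from the paper is in the passage from $\gamma=1$ to a left-neighborhood. You try a direct continuity argument on the Bloch spectrum and, quite rightly, flag the singular nature of the limit $\kappa_\gamma\to 0$ (diverging spatial period, degenerating Bloch family, sideband modes). The paper does not attempt this perturbation at all: it simply invokes Maginu \cite{Maginu_1981}, whose theorem states precisely that strong stability of the homogeneous limit cycle implies marginal stability in linearized criterion for all wave trains of amplitude sufficiently close to $1$. So the ``hard part'' you isolated is handled by citation, not by a new argument. Your proposal is therefore correct in outline but incomplete as written; to close it, either carry out the long-wavelength spectral analysis you sketch, or---as the paper does---appeal to Maginu's result, for which the strong-stability computation you already have is the only hypothesis to check.
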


The proof of \propref{stability_wave_trains} is very simple but is actually 
not provided in \cite{Maginu_1981}. For the sake of completeness, it will be detailed in \appref{WT}.
Note that the notion of stability in the statement above is the marginal stability in 
linearized criterion \cite{Fife_1979} and not the asymptotic waveform stability \cite{Maginu_1981},
which might fail in general and remains a difficult and open question.

Numerically, we will observe \textit{propagating terraces} (succession of compatible
waves with decreasingly ordered speeds, as defined by Ducrot,
Giletti and Matano \cite{Ducrot_Giletti_Matano}) where $\mathbf{0}$
is invaded by $\mathbf{1}$ and then $\mathbf{1}$ is slowly invaded
by a stable wave train of amplitude $\gamma$ close to $1$. 
The former invasion takes the form of a \textit{traveling wave} (here defined as an 
entire solution with constant profile and speed) whereas the latter takes the 
form of a \textit{pulsating front} (more general entire solution connecting at some constant speed
two periodic, possibly homogeneous, solutions, defined for instance by Nadin in \cite{Nadin_2009} and also known 
as pulsating traveling wave). 

Regarding traveling waves, the following proposition can
be straightforwardly established by looking for wave profiles of the
form $\xi\mapsto p\left(\xi\right)\mathbf{1}$ (we refer to \cite{Girardin_2017}
for a similar construction).
\begin{prop}
\label{prop:2}
The system (\ref{sys:1}) admits a family of monotonic traveling plane
waves connecting $\mathbf{0}$ to $\mathbf{1}$ at speed $c\geq2$. 
\end{prop}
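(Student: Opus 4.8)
The plan is to look for solutions whose profile is proportional to the homogeneous steady state $\mathbf{1}$, that is, of the form $\mathbf{u}(t,x)=p(x\cdot e-ct)\mathbf{1}$ for some unit vector $e\in\mathbb{S}^{n-1}$, some scalar speed $c$ and some scalar profile $p$. This ansatz is natural because the parameters of (\ref{sys:1}) are normalized so that $\mathbf{1}$ is a common eigenvector of the two matrices involved: each row of $\mathbf{M}$ sums to $0$, whence $\mathbf{M}\mathbf{1}=\mathbf{0}$, and each row of $\mathbf{C}$ sums to $1$, whence $\mathbf{C}\mathbf{1}=\mathbf{1}$. These are precisely the identities that make $\mathbf{1}$ a steady state in the first place.

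First I would substitute the ansatz into the vector form of (\ref{sys:1}). Writing $\xi=x\cdot e-ct$, one has $\partial_{t}\mathbf{u}=-c\,p'(\xi)\mathbf{1}$ and $\Delta\mathbf{u}=p''(\xi)\mathbf{1}$ since $|e|=1$. On the right-hand side, the mutation term $\mu\mathbf{M}(p\mathbf{1})=\mu p\,\mathbf{M}\mathbf{1}=\mathbf{0}$ vanishes, while the competition term becomes $\bigl(\mathbf{C}(p\mathbf{1})\bigr)\circ(p\mathbf{1})=(p\mathbf{1})\circ(p\mathbf{1})=p^{2}\mathbf{1}$. Hence all three components collapse onto the single scalar ODE
\[
    p''+c\,p'+p(1-p)=0,
\]
which is exactly the traveling-wave equation attached to the scalar Fisher--KPP equation $\partial_{t}p-\Delta p=p(1-p)$.

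At this point the result follows from classical scalar KPP theory. The reaction $f(p)=p(1-p)$ satisfies $f(0)=f(1)=0$, $f>0$ on $(0,1)$, and $f(p)\le f'(0)\,p$ with $f'(0)=1$, so it is of KPP type. The Aronson--Weinberger / KPP existence theorem then provides, for every $c\geq c^{*}=2\sqrt{f'(0)}=2$ (and for no smaller $c$), a monotone profile $p$ with $p(-\infty)=1$ and $p(+\infty)=0$, the critical speed being dictated by the linearization at the unstable state $p=0$. Since $0\cdot\mathbf{1}=\mathbf{0}$ and $1\cdot\mathbf{1}=\mathbf{1}$, the corresponding vector solution $p\,\mathbf{1}$ connects $\mathbf{0}$ to $\mathbf{1}$, and the monotonicity of $p$ passes componentwise to the vector profile. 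Letting $e$ range over $\mathbb{S}^{n-1}$ and $c$ over $[2,+\infty)$ yields the announced family.

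There is no genuine obstacle in this argument: once the row-sum normalizations are recorded, the reduction to the scalar logistic equation is exact and the rest is a citation to standard one-dimensional results. The only point requiring a little care is the self-consistency of the ansatz, namely that a profile proportional to $\mathbf{1}$ does solve all three equations simultaneously; this is guaranteed precisely by $\mathbf{M}\mathbf{1}=\mathbf{0}$ and $\mathbf{C}\mathbf{1}=\mathbf{1}$. Had either identity failed, the three scalar equations would not coincide and the ansatz would be inconsistent, so it is worth emphasizing that the construction is special to wave profiles aligned with $\mathbf{1}$ and says nothing about waves with non-collinear components.
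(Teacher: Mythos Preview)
Your proposal is correct and follows essentially the same approach as the paper: the paper indicates that the proposition is established ``by looking for wave profiles of the form $\xi\mapsto p(\xi)\mathbf{1}$'' and refers to \cite{Girardin_2017} for a similar construction, which is exactly your ansatz reducing the system to the scalar Fisher--KPP traveling-wave equation. You have simply spelled out the computation that the paper leaves implicit.
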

This is indeed such a monotonic profile we observe numerically.

By direct application of theorems due to Fraile and Sabina 
\cite{Fraile_Sabina_1985,Fraile_Sabina_1989}, there exist also point-to-periodic rapid 
traveling waves connecting $\mathbf{1}$ to wave trains of large amplitude;
however, these are not the pulsating fronts that we observe numerically, which do not
have a constant profile and have two distinct speeds, the one of the invasion front
and the one of the wave train. 

More precisely, close to the bifurcation value $\mu_{\textup{H}}$,
the speed of the pulsating front of the terrace connecting $\mathbf{1}$ to the wave train is 
$2\sqrt{3\left( \mu_{\textup{H}}-\mu \right)}$. On the contrary, the intrinsic speed of the wave
train, $c_\gamma = \frac{\sigma_\gamma}{\kappa_\gamma}$, is negative and of very large absolute 
value (consistently with $\left|c_\gamma\right|\to\infty$ as $\gamma\to 1$). 
We emphasize that the preceding formula for the invasion speed is linearly determinate (in some
sense precised below in \subsecref{numerics_diffusion}) and was first predicted heuristically 
by Sherratt \cite{Sherratt_1998}.
Interestingly, in Sherratt's predictions, both the invasion speed and the wave train speed
do not depend on the initial condition or even on the speed of the first invasion 
($\mathbf{1}$ into $\mathbf{0}$). This is confirmed by our numerical experiments.

Since this pulsating front is parametrized by two distinct speeds (that of the invasion
and that of the wave train), the interesting problem of its existence is in fact very difficult. 
Seemingly similar
results on periodic wave trains \cite{Kopell_Howard_1973,Murray_II}, point-to-periodic rapid 
traveling waves \cite{Fraile_Sabina_1985,Fraile_Sabina_1989} or even space-periodic pulsating fronts 
(recently studied by Faye and Holzer \cite{Faye_Holzer_14}) are proved by means of codimension 
$1$ bifurcation arguments. The space-time periodic pulsating front at hand is a codimension 
$2$ bifurcation problem. Its resolution is definitely outside the scope of this paper and 
we leave it for future work. 

Another prediction of Sherratt \cite{Sherratt_1998} is the possible
nonexistence of such propagating terraces when the speed $c_1$ of the first
invasion satisfies 
\[
    c_1>\frac{\frac{7\sqrt{3}}{20}}{\sqrt{3\left( \mu_{\textup{H}}-\mu \right)}}=
    \frac{7}{20\sqrt{\mu_{\textup{H}}-\mu}}.
\]
More precisely, when this condition holds, a periodic wave train of speed $c_1$ 
and small amplitude exists and therefore there is the possibility of
a point-to-periodic rapid traveling wave connecting directly $\mathbf{0}$ to this unstable 
wave train. 
Recall however that solutions that are initially compactly supported asymptotically spread
at speed $2$ \cite{Girardin_2016_2}, which is clearly smaller than the above
threshold close to the bifurcation value,
whence these traveling waves are irrelevant regarding biological applications.
Anyway, even with initial conditions with appropriate exponential decay \cite{Girardin_2017},
we numerically obtained propagating terraces and did not manage to catch 
these non-monotonic rapid traveling waves. Thus
their existence remains a completely open problem. We point out here that this existence 
would be in sharp contrast with a nonexistence result by Alfaro and Coville for the nonlocal
Fisher--KPP equation \cite{Alfaro_Coville_2012}. 

Thanks to the $\lambda$-$\omega$ normal form, Sherratt manages also to find formulas for
the amplitude $\gamma$ and the speed $c_\gamma$ of the wave train. Nevertheless,
it is quite tedious to reduce our three-component system to the appropriate 
two-component $\lambda$-$\omega$ system, as its phase ``plane'' is the unstable manifold 
of the steady state $\mathbf{1}$ (which is definitely not a Euclidean plane).
For the sake of brevity, we choose to omit here this reduction and the precise predictions
on the wave train. 

The contrast with the very simple dynamics exhibited by Cantrell,
Cosner and Yu for the two-component system is striking. This is of
course reminiscent of the contrast between two-component and three-component
competitive Lotka--Volterra systems: the two-component ones always
have a simple monostable or bistable structure, devoid of periodic
orbits, whereas some three-component ones have stable limit cycles
(as established by Zeeman \cite{Zeeman_1993} in her classification
of the 33 stable nullcline equivalence classes). However let us emphasize once more
that our system is not competitive near $\mathbf{1}$, so that
qualitatively similar observations for three-species competitive systems
(for instance, those of Petrovskii--Kawasaki--Takasu--Shigesada 
\cite{Petrovskii_Kawasaki_Takasu_Shigesada}) are actually
unrelated to our results.

We emphasize that here all diffusion rates are equal (and normalized), whence there is
no Turing instability with respect to the space variable. Obviously, if the phenotypes 
differ also in diffusion rate, then even more complicated dynamics are to be expected -- and
can be observed numerically. On this vast topic, we refer for instance to 
Smith--Sherratt \cite{Smith_Sherratt}.

This collection of results confirms that the traveling
waves constructed in \cite{Girardin_2016_2} are definitely not the end of the story from
the viewpoint of the asymptotic spreading for the Cauchy problem. 

\subsection{What about more general systems?}
\thmref{1},1-2 (and its various consequences) can be easily extended to general KPP systems with
Lotka--Volterra competition, equal diffusion rates and any number of components 
provided the matrices $\mathbf{L}$ and $\mathbf{C}$ remain circulant matrices and a Hopf
bifurcation does occur (thus some asymmetry is required). \thmref{1},3 needs a bit more 
care and appropriately chosen coefficients but should still hold true in a much more 
general framework. 

Here, we choose to focus on a particular three-dimensional example, mainly because
our point is to confirm the existence of oscillatory KPP systems. There are secondary reasons
worth mentioning: first, the particular choice we make simplifies a lot the notations and 
calculations; second, Hopf bifurcations with a hyperbolic transverse component are at their 
core a three-dimensional phenomenon and taking into account more dimensions is just cumbersome. 

\section{Proof of \thmref{1}}

\subsection{Well-known facts on circulant matrices}

The matrices $\mathbf{I}$, $\mathbf{M}$ and $\mathbf{C}$ are all
circulant matrices. Recall that the $3\times3$ circulant matrix 
\[
\left(\begin{matrix}a & b & c\\
c & a & b\\
b & c & a
\end{matrix}\right)
\]
 admits as eigenpairs $\left(a+b+c,\mathbf{1}\right)$, $\left(a+b\textup{j}+c\overline{\textup{j}},\mathbf{z}\right)$
and $\left(a+b\overline{\textup{j}}+c\textup{j},\overline{\mathbf{z}}\right)$,
where 
\[
    \textup{j}=\exp\left(\frac{2\textup{i}\pi}{3}\right)\textup{ and }
\mathbf{z}=\frac{1}{\sqrt{3}}\left(\begin{matrix}1\\
\textup{j}\\
\overline{\textup{j}}
\end{matrix}\right).
\]
Recall that $\textup{j}+\overline{\textup{j}}=-1$,
$\mathbf{z}\circ\mathbf{z}=\frac{1}{\sqrt{3}}\overline{\mathbf{z}}$ and
$\mathbf{z}\circ\overline{\mathbf{z}}=\frac{1}{3}\mathbf{1}$.

Recall also that the set of all $n\times n$ circulant matrices forms
a commutative algebra and that the matrix 
\[
\mathbf{U}=\frac{1}{\sqrt{3}}\left(\begin{matrix}1 & 1 & 1\\
1 & \textup{j} & \overline{\textup{j}}\\
1 & \overline{\textup{j}} & \textup{j}
\end{matrix}\right)
\]
 is a unitary matrix such that 
\[
\left(\begin{matrix}a & b & c\\
c & a & b\\
b & c & a
\end{matrix}\right)=\mathbf{U}\left(\begin{matrix}a+b+c & 0 & 0\\
0 & a+b\textup{j}+c\overline{\textup{j}} & 0\\
0 & 0 & a+b\overline{\textup{j}}+c\textup{j}
\end{matrix}\right)\overline{\mathbf{U}}^{T}.
\]

These basic properties bring forth a very convenient spectral decomposition
for the problem. 

\subsection{Uniqueness of the constant positive steady state}
\begin{proof}
    Let $\mu>0$ and $\mathbf{v}\geq\mathbf{0}$
be a solution of $\left(\mathbf{I}+\mu\mathbf{M}\right)\mathbf{v}-\mathbf{C}\mathbf{v}\circ\mathbf{v}=\mathbf{0}$. 

Writing $\mathbf{v}=\alpha\mathbf{1}+\beta\mathbf{z}+\gamma\overline{\mathbf{z}}$
with $\alpha,\beta,\gamma\in\mathbb{C}$ and identifying the real and imaginary parts, we 
straightforwardly verify that $\alpha\in\left[0,+\infty\right)$ (by nonnegativity of $\mathbf{v}$) 
and $\beta=\overline{\gamma}$ (by the fact that $\mathbf{v}$ is a real vector). 

Then the equality $\left(\mathbf{I}+\mu\mathbf{M}\right)\mathbf{v}=\mathbf{C}\mathbf{v}\circ\mathbf{v}$
reads 
\[
\alpha\mathbf{1}+\left(1-3\mu\right)\beta\mathbf{z}+\left(1-3\mu\right)\overline{\beta\mathbf{z}} =\left(\alpha\mathbf{1}+\frac{7\textup{j}}{10}\beta\mathbf{z}+\frac{7\overline{\textup{j}}}{10}\overline{\beta}\overline{\mathbf{z}}\right)\circ\left(\alpha\mathbf{1}+\beta\mathbf{z}+\overline{\beta}\overline{\mathbf{z}}\right).
\]
After a few algebraic manipulations, this is equivalent to the system
\[
\left\{ \begin{matrix}\alpha^{2}-\alpha-\frac{7}{30}\left|\beta\right|^{2} & =0\,\\
    \frac{7\sqrt{3}}{30}\overline{\textup{j}\beta^{2}}+\left(\alpha-\left(1-3\mu\right)+\frac{7}{10}\left(\frac{-1+\textup{i}\sqrt{3}}{2}\right)\alpha\right)\beta & =0.
\end{matrix}\right.
\]


On one hand, assuming by contradiction the existence of a solution such that $\left|\beta\right|\neq 0$
and taking the square of the modulus of the second line multiplied by $20$, we deduce 
\[
\frac{196}{3}\left|\beta\right|^{2}=\left(13\alpha-20\left(1-3\mu\right)\right)^{2}+147\alpha^{2}=316\alpha^2-520\left( 1-3\mu \right)\alpha+400\left( 1-3\mu \right)^2,
\]
that is
\[
\frac{49}{3}\left|\beta\right|^{2}=79\alpha^2-130\left( 1-3\mu \right)\alpha+100\left( 1-3\mu \right)^2.
\]

On the other hand, from the first line, we deduce $\frac{49}{3}\left|\beta\right|^{2}=70\left(\alpha^2-\alpha\right)$.

Equalizing the two expressions of $\frac{49}{3}\left|\beta\right|^2$, we obtain 
\[
    9\alpha^2+10\left( 7-13\left( 1-3\mu \right) \right)\alpha+100\left( 1-3\mu \right)^2=0.
\]

On one hand, the discriminant of this equation is
$100\left(\left(7-13\left(1-3\mu\right)\right)^{2}-36\left(1-3\mu\right)^{2}\right)$, which is itself nonnegative if and only if 
\[
7-26\left(1-3\mu\right)+19\left(1-3\mu\right)^{2}\geq0,
\]
that is if and only if $1-3\mu\notin\left(\frac{7}{19},1\right)$,
that is if and only if $\mu\notin\left(0,\frac{4}{19}\right)$.
Therefore, $\alpha$ being real, $\mathbf{v}$ cannot possibly exist if $\mu\in\left(0,\frac{4}{19}\right)$.
Hence necessarily $\mu\geq\frac{4}{19}$.

On the other hand, at $\tilde{\alpha}=0$, the polynomial 
$\tilde{\alpha}\mapsto 9\tilde{\alpha}^2+10\left( 7-13\left( 1-3\mu \right) \right)\tilde{\alpha}+100\left( 1-3\mu \right)^2$
is nonnegative and with derivative $30\left( 13\mu-2 \right)$, which is positive since we now assume
the necessary condition $\mu\geq\frac{4}{19}>\frac{2}{13}$.
Therefore the polynomial has actually no zero in $\left(0,+\infty\right)$.
Since $\mathbf{v}\geq \mathbf{0}$ and $\left|\beta\right|\neq 0$ imply together $\alpha>0$, we find
a contradiction.

This exactly means that all solutions satisfy $\beta=0$ and $\alpha^{2}-\alpha=0$, so that $\mathbf{0}$
and $\mathbf{1}$ are indeed the only solutions as soon as $\mu>0$.

\end{proof}

\subsection{The linearization at $\mathbf{1}$: eigenelements and Hopf
bifurcation}
\begin{proof}
The change of variable $\mathbf{v}=\mathbf{1}+\mathbf{w}$ leads to
\[
\left(\mathbf{I}+\mu\mathbf{M}\right)\mathbf{v}-\mathbf{C}\mathbf{v}\circ\mathbf{v}=\left(\mu\mathbf{M}-\mathbf{C}\right)\mathbf{w}-\mathbf{C}\mathbf{w}\circ\mathbf{w}.
\]
 Hence the linearization of the reaction term at $\mathbf{v}=\mathbf{1}$
is exactly 
\[
\mu\mathbf{M}-\mathbf{C}=\left(\begin{matrix}-2\mu-\frac{1}{10} & \mu-\frac{8}{10} & \mu-\frac{1}{10}\\
\mu-\frac{1}{10} & -2\mu-\frac{1}{10} & \mu-\frac{8}{10}\\
\mu-\frac{8}{10} & \mu-\frac{1}{10} & -2\mu-\frac{1}{10}
\end{matrix}\right).
\]
(This is of course consistent with a direct differentiation.)

Since $\mu\mathbf{M}-\mathbf{C}$ is a circulant matrix, three complex
eigenpairs are $\left(-1,\mathbf{1}\right)$, $\left(\lambda_{\mu},\mathbf{z}\right)$,
$\left(\overline{\lambda_{\mu}},\overline{\mathbf{z}}\right)$, where
\begin{align*}
\lambda_{\mu} & =-2\mu-\frac{1}{10}+\left(\mu-\frac{8}{10}\right)\textup{j}+\left(\mu-\frac{1}{10}\right)\overline{\textup{j}}\\
 & =3\left(\frac{7}{60}-\mu\right)+\textup{i}\frac{7\sqrt{3}}{20}.
\end{align*}

This proves indeed the local asymptotic stability when $\mu>\frac{7}{60}$,
the Hopf bifurcation at $\mu=\frac{7}{60}$ and, as the transverse
component is hyperbolic, the uniqueness of the limit cycle $\mathsf{C}_{\mu}$ close 
to the bifurcation value.
\end{proof}
Thereafter we will also need adjoint eigenvectors satisfying $\overline{\left(\mu\mathbf{M}-\mathbf{C}\right)^{T}}\mathbf{z}=\lambda\mathbf{z}$,
that is $\left(\mu\mathbf{M}-\mathbf{C}\right)^{T}\mathbf{z}=\lambda\mathbf{z}$.
Using this time the fact that $\left(\mu\mathbf{M}-\mathbf{C}\right)^{T}$
is circulant, eigenpairs of it are $\left(-1,\mathbf{1}\right)$,
$\left(\lambda_{\mu},\overline{\mathbf{z}}\right)$, $\left(\overline{\lambda_{\mu}},\mathbf{z}\right)$. 

\subsection{The first Lyapunov coefficient}

First, we recall a well-known statement (we refer, for instance, to Kuznetsov \cite[Formula 5.39, p. 180]{Kuznetsov_2004}). 

\begin{thm}
Let $N\in\mathbb{N}$ such that $N\geq2$, $I\subset\mathbb{R}$ be an interval containing
$0$ and $\mathbf{f}\in\mathscr{C}^{3}\left(\mathbb{R}^{N}\times\mathbb{R},\mathbb{R}^{N}\right)$
such that $\mathbf{f}\left(\mathbf{0},\eta\right)=\mathbf{0}$ for
all $\eta\in I$. 

Assume that, if $\eta\in I\cap\left(-\infty,0\right)$,
$\mathbf{0}$ is a locally asymptotically stable steady state for
the dynamical system 
\[
    \dot{\mathbf{u}}=\mathbf{f}\left(\mathbf{u},\eta\right),
\]
and that at $\eta=0$ it undergoes a Hopf bifurcation (with a center subspace of dimension $2$). 

Let $\mathbf{A}\in\mathsf{M}_{N}\left(\mathbb{R}\right)$, $\mathbf{b}:\mathbb{R}^{N}\times\mathbb{R}^{N}\to\mathbb{R}^{N}$
and $\mathbf{c}:\mathbb{R}^{N}\times\mathbb{R}^{N}\times\mathbb{R}^{N}\to\mathbb{R}^{N}$
such that the Taylor expansion of $\mathbf{u}\mapsto\mathbf{f}\left(\mathbf{u},0\right)$
at $\mathbf{0}$ has the form 
\[
\mathbf{f}\left(\mathbf{u},0\right)=\mathbf{A}\mathbf{u}+\frac{1}{2}\mathbf{b}\left(\mathbf{u},\mathbf{u}\right)+\frac{1}{6}\mathbf{c}\left(\mathbf{u},\mathbf{u},\mathbf{u}\right)+O\left(\left|\mathbf{u}\right|^{4}\right).
\]
 Let $\mathbf{q}\in\mathbb{C}^{N}$ be an eigenvector of $\mathbf{A}$
associated with the purely imaginary eigenvalue $\lambda\in\textup{i}\mathbb{R}_{+}$
and $\mathbf{p}\in\mathbb{C}^{N}$ be an eigenvector of $\mathbf{A}^{T}$
associated with $-\lambda$, normalized so that $\overline{\mathbf{p}}^{T}\mathbf{q}=\overline{\mathbf{q}}^{T}\mathbf{q}=1$.

Then the Hopf bifurcation is supercritical, respectively subcritical,
if the first Lyapunov coefficient 
\[
l_{1}\left(0\right)=\frac{1}{2\left|\lambda\right|}\Real\left[\overline{\mathbf{p}}^{T}\mathbf{c}\left(\mathbf{q},\mathbf{q},\overline{\mathbf{q}}\right)-2\overline{\mathbf{p}}^{T}\mathbf{b}\left(\mathbf{q},\mathbf{A}^{-1}\mathbf{B}\left(\mathbf{q},\overline{\mathbf{q}}\right)\right)+\overline{\mathbf{p}}^{T}\mathbf{c}\left(\overline{\mathbf{q}},\left(2\textup{i}\omega_{0}\mathbf{I}_{N}-\mathbf{A}\right)^{-1}\mathbf{B}\left(\mathbf{q},\mathbf{q}\right)\right)\right]
\]
 is negative, respectively positive. 
\end{thm}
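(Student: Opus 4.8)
The plan is to obtain the formula by the two reductions that underlie every Hopf bifurcation analysis: first restrict the flow $\dot{\mathbf{u}}=\mathbf{f}(\mathbf{u},0)$ to its two-dimensional center manifold, then bring the resulting planar vector field into Poincar\'e normal form up to cubic order, the sign of whose single resonant coefficient decides super- or subcriticality. Throughout I write the field at the bifurcation value as $\mathbf{f}(\mathbf{u},0)=\mathbf{A}\mathbf{u}+\frac12\mathbf{b}(\mathbf{u},\mathbf{u})+\frac16\mathbf{c}(\mathbf{u},\mathbf{u},\mathbf{u})+O(|\mathbf{u}|^{4})$, with $\mathbf{b}$ and $\mathbf{c}$ the symmetric multilinear forms furnished by the second and third differentials. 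Because the center subspace is two-dimensional, $\pm\textup{i}\omega_{0}$ (with $\omega_{0}=|\lambda|$) are simple eigenvalues of $\mathbf{A}$ and the rest of its spectrum lies strictly off the imaginary axis; in particular $0$ and $2\textup{i}\omega_{0}$ lie in the resolvent set, so that $\mathbf{A}^{-1}$ and $(2\textup{i}\omega_{0}\mathbf{I}_{N}-\mathbf{A})^{-1}$ are well defined. This hyperbolicity of the transverse part is exactly what makes the stated formula meaningful, and it is precisely the content of the assumption that the center subspace has dimension $2$.

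First I would pass to a complex coordinate on the center eigenspace. With the critical eigenvector $\mathbf{q}$ and the adjoint eigenvector $\mathbf{p}$ normalized by $\overline{\mathbf{p}}^{T}\mathbf{q}=1$ (hence $\overline{\mathbf{p}}^{T}\overline{\mathbf{q}}=0$), every $\mathbf{u}$ near $\mathbf{0}$ splits as $\mathbf{u}=z\mathbf{q}+\overline{z}\overline{\mathbf{q}}+\mathbf{w}$ with $z=\overline{\mathbf{p}}^{T}\mathbf{u}\in\mathbb{C}$ and $\mathbf{w}$ transverse. Projecting the flow with $\overline{\mathbf{p}}^{T}$ gives $\dot z=\textup{i}\omega_{0}z+\overline{\mathbf{p}}^{T}\mathbf{F}(z\mathbf{q}+\overline{z}\overline{\mathbf{q}}+\mathbf{w})$, where $\mathbf{F}$ gathers the quadratic and higher terms. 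Writing the center manifold as the graph $\mathbf{w}=\frac12\mathbf{w}_{20}z^{2}+\mathbf{w}_{11}z\overline{z}+\frac12\mathbf{w}_{02}\overline{z}^{2}+O(|z|^{3})$, substituting into the invariance relation and identifying quadratic coefficients yields the homological equations $(2\textup{i}\omega_{0}\mathbf{I}_{N}-\mathbf{A})\mathbf{w}_{20}=\mathbf{b}(\mathbf{q},\mathbf{q})$ and $\mathbf{A}\mathbf{w}_{11}=-\mathbf{b}(\mathbf{q},\overline{\mathbf{q}})$, which I solve by the two resolvents above, namely $\mathbf{w}_{20}=(2\textup{i}\omega_{0}\mathbf{I}_{N}-\mathbf{A})^{-1}\mathbf{b}(\mathbf{q},\mathbf{q})$ and $\mathbf{w}_{11}=-\mathbf{A}^{-1}\mathbf{b}(\mathbf{q},\overline{\mathbf{q}})$.

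Next I would read off the reduced planar dynamics in the form $\dot z=\textup{i}\omega_{0}z+\sum_{j+k\ge2}\frac{1}{j!\,k!}g_{jk}z^{j}\overline{z}^{k}$, with $g_{20}=\overline{\mathbf{p}}^{T}\mathbf{b}(\mathbf{q},\mathbf{q})$, $g_{11}=\overline{\mathbf{p}}^{T}\mathbf{b}(\mathbf{q},\overline{\mathbf{q}})$, and the only resonant cubic coefficient
\[
g_{21}=\overline{\mathbf{p}}^{T}\mathbf{c}(\mathbf{q},\mathbf{q},\overline{\mathbf{q}})+2\,\overline{\mathbf{p}}^{T}\mathbf{b}(\mathbf{q},\mathbf{w}_{11})+\overline{\mathbf{p}}^{T}\mathbf{b}(\overline{\mathbf{q}},\mathbf{w}_{20}).
\]
Inserting the solved $\mathbf{w}_{11}$ and $\mathbf{w}_{20}$ turns $g_{21}$ into exactly the bracketed combination of the statement. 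A near-identity (Poincar\'e) change of coordinate then removes every quadratic and every non-resonant cubic term, leaving $\dot z=\textup{i}\omega_{0}z+c_{1}z|z|^{2}+O(|z|^{4})$; the leading radial dynamics $\dot r=(\Real c_{1})\,r^{3}+O(r^{4})$ show that $\Real c_{1}<0$ produces a stable (supercritical) and $\Real c_{1}>0$ an unstable (subcritical) bifurcating cycle, and one sets $l_{1}(0)=\Real(c_{1})/\omega_{0}$.

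The routine but genuinely delicate part is the final reconciliation of constants. Carrying the quadratic terms through the normal-form transformation produces the planar expression $l_{1}(0)=\frac{1}{2\omega_{0}^{2}}\Real(\textup{i}\,g_{20}g_{11}+\omega_{0}g_{21})$, and the point is that choosing the resolvents $\mathbf{w}_{20},\mathbf{w}_{11}$ in the \emph{full} space, rather than projecting them onto the transverse subspace, feeds their components along $\mathbf{q},\overline{\mathbf{q}}$ back into the pairings $\overline{\mathbf{p}}^{T}\mathbf{b}(\overline{\mathbf{q}},\mathbf{w}_{20})$ and $\overline{\mathbf{p}}^{T}\mathbf{b}(\mathbf{q},\mathbf{w}_{11})$ so as to reproduce precisely the cross-contribution $\frac{1}{2\omega_{0}^{2}}\Real(\textup{i}\,g_{20}g_{11})$. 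After this the whole coefficient collapses to $l_{1}(0)=\frac{1}{2|\lambda|}\Real(g_{21})$, which is the displayed formula. This bookkeeping, governed by $\lambda=\textup{i}\omega_{0}$ and the normalization $\overline{\mathbf{p}}^{T}\mathbf{q}=1$, is where sign and factor errors are easiest to commit; the conceptual content is otherwise the standard center-manifold-plus-normal-form computation, and for the use made of it here it suffices to invoke the result as recalled from Kuznetsov \cite{Kuznetsov_2004}.
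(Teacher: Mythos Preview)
The paper does not prove this theorem at all: it is stated as a well-known result with a citation to Kuznetsov \cite[Formula 5.39, p.~180]{Kuznetsov_2004}, and the proof block that follows it in the paper is the \emph{application} of the formula to the specific system (\ref{sys:2}), not a derivation of the formula itself. Your sketch via center-manifold reduction and Poincar\'e normal form is the standard derivation and is substantively correct; you even note in your last sentence that invoking Kuznetsov suffices, which is precisely what the paper does.
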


We are now in position to apply this theorem to our case. 
\begin{proof}
Performing the changes of variable $\mathbf{v}=\mathbf{1}+\mathbf{w}$
and $\mu=\frac{7}{60}-\eta$ and identifying the Taylor expansion
of $\mathbf{w}\mapsto\left(\frac{7}{60}\mathbf{M}-\mathbf{C}\right)\mathbf{w}-\mathbf{C}\mathbf{w}\circ\mathbf{w}$
at $\mathbf{w}=\mathbf{0}$, which is actually an exact expansion,
we find
\[
\left(\frac{7}{60}\mathbf{M}-\mathbf{C}\right)\mathbf{w}-\mathbf{C}\mathbf{w}\circ\mathbf{w}=\mathbf{A}\mathbf{w}+\frac{1}{2}\mathbf{b}\left(\mathbf{w},\mathbf{w}\right),
\]
where $\mathbf{A}=\frac{7}{60}\mathbf{M}-\mathbf{C}$ and $\mathbf{b}:\left(\mathbf{v},\mathbf{w}\right)\mapsto-\mathbf{w}\circ\mathbf{C}\mathbf{v}-\mathbf{v}\circ\mathbf{C}\mathbf{w}$.
Let $\lambda=\lambda_{7/60}=-\overline{\lambda_{7/60}}=\textup{i}\frac{7\sqrt{3}}{20}$.
The vector $\mathbf{z}$ is an eigenvector of $\left(\frac{7}{60}\mathbf{M}-\mathbf{C}\right)$
with respect to the eigenvalue $\lambda$ and an eigenvector of $\left(\frac{7}{60}\mathbf{M}-\mathbf{C}\right)^{T}$
with respect to the eigenvalue $-\lambda$, so that in the preceding
statement we have $\mathbf{p}=\mathbf{q}=\mathbf{z}$. 

The most convenient way to identify one by one the terms involved
in the expression of the first Lyapunov coefficient is to use again
the properties of circulant matrices. Doing so, we find:

\begin{align*}
\mathbf{b}\left(\mathbf{z},\overline{\mathbf{z}}\right) & =-2\Real\left(\overline{\mathbf{z}}\circ\mathbf{C}\mathbf{z}\right)\\
& =-2\Real\left(\frac{7\textup{j}}{10}\overline{\mathbf{z}}\circ\mathbf{z}\right)\\
 & =\frac{7}{30}\mathbf{1},
\end{align*}

\begin{align*}
\mathbf{A}^{-1} & =\left(\mathbf{U}\left(\begin{matrix}-1 & 0 & 0\\
0 & \lambda & 0\\
0 & 0 & -\lambda
\end{matrix}\right)\overline{\mathbf{U}}^{T}\right)^{-1}\\
 & =\mathbf{U}\left(\begin{matrix}\left(-1\right)^{-1} & 0 & 0\\
0 & \left(\textup{i}\frac{7\sqrt{3}}{20}\right)^{-1} & 0\\
0 & 0 & \left(-\textup{i}\frac{7\sqrt{3}}{20}\right)^{-1}
\end{matrix}\right)\overline{\mathbf{U}}^{T}\\
 & =\mathbf{U}\left(\begin{matrix}-1 & 0 & 0\\
0 & -\textup{i}\frac{20\sqrt{3}}{21} & 0\\
0 & 0 & \textup{i}\frac{20\sqrt{3}}{21}
\end{matrix}\right)\overline{\mathbf{U}}^{T},
\end{align*}

\[
\mathbf{A}^{-1}\mathbf{b}\left(\mathbf{z},\overline{\mathbf{z}}\right)=-\frac{7}{30}\mathbf{1},
\]

\begin{align*}
\mathbf{b}\left(\mathbf{z},\mathbf{A}^{-1}\mathbf{b}\left(\mathbf{z},\overline{\mathbf{z}}\right)\right) & =\frac{7}{30}\left(\mathbf{C}\mathbf{z}+\mathbf{z}\right)\\
 & =\frac{7}{300}\left(10+7\textup{j}\right)\mathbf{z},
\end{align*}

\[
\overline{\mathbf{\mathbf{z}}}^{T}\mathbf{b}\left(\mathbf{z},\mathbf{A}^{-1}\mathbf{b}\left(\mathbf{z},\overline{\mathbf{z}}\right)\right)=\frac{7}{300}\left(10+7\textup{j}\right),
\]

\begin{align*}
\mathbf{b}\left(\mathbf{z},\mathbf{z}\right) & =-2\mathbf{z}\circ\mathbf{C}\mathbf{z}\\
& =-\frac{14\textup{j}}{10}\mathbf{z}\circ\mathbf{z}\\
 & =-\frac{14\sqrt{3}}{30}\textup{j}\overline{\mathbf{z}},
\end{align*}

\begin{align*}
\left(\textup{i}\frac{7\sqrt{3}}{10}\mathbf{I}-\mathbf{A}\right)^{-1} & =\left(\mathbf{U}\left(\begin{matrix}1+\textup{i}\frac{7\sqrt{3}}{10} & 0 & 0\\
0 & -\lambda+\textup{i}\frac{7\sqrt{3}}{10} & 0\\
0 & 0 & \lambda+\textup{i}\frac{7\sqrt{3}}{10}
\end{matrix}\right)\overline{\mathbf{U}}^{T}\right)^{-1}\\
 & =\mathbf{U}\left(\begin{matrix}\left(1+\textup{i}\frac{7\sqrt{3}}{10}\right)^{-1} & 0 & 0\\
0 & \left(-\textup{i}\frac{7\sqrt{3}}{20}+\textup{i}\frac{7\sqrt{3}}{10}\right)^{-1} & 0\\
0 & 0 & \left(\textup{i}\frac{7\sqrt{3}}{20}+\textup{i}\frac{7\sqrt{3}}{10}\right)^{-1}
\end{matrix}\right)\overline{\mathbf{U}}^{T}\\
 & =\mathbf{U}\left(\begin{matrix}\frac{100-\textup{i}70\sqrt{3}}{247} & 0 & 0\\
0 & -\textup{i}\frac{20\sqrt{3}}{21} & 0\\
0 & 0 & -\textup{i}\frac{20\sqrt{3}}{63}
\end{matrix}\right)\overline{\mathbf{U}}^{T},
\end{align*}

\begin{align*}
    \left(\textup{i}\frac{7\sqrt{3}}{10}\mathbf{I}-\mathbf{A}\right)^{-1}\mathbf{b}\left(\mathbf{z},\mathbf{z}\right) & =-\frac{14\sqrt{3}}{30}\textup{j}\left(\textup{i}\frac{7\sqrt{3}}{10}\mathbf{I}-\mathbf{A}\right)^{-1}\overline{\mathbf{z}}\\
    & =-\frac{14\sqrt{3}}{30}\textup{j}\left(-\textup{i}\frac{20\sqrt{3}}{63}\right)\overline{\mathbf{z}}\\
    & =\frac{4}{9}\textup{i}\textup{j}\overline{\mathbf{z}},
\end{align*}

\begin{align*}
    \mathbf{b}\left(\overline{\mathbf{z}},\left(\textup{i}\frac{7\sqrt{3}}{10}\mathbf{I}-\mathbf{A}\right)^{-1}\mathbf{b}\left(\mathbf{z},\mathbf{z}\right)\right) & =\frac{4}{9}\textup{i}\textup{j}\mathbf{b}\left(\overline{\mathbf{z}},\overline{\mathbf{z}}\right)\\
    & =\frac{4}{9}\textup{i}\textup{j}\overline{\mathbf{b}\left(\mathbf{z},\mathbf{z}\right)}\\
    & =-\frac{56\sqrt{3}}{270}\textup{i}\mathbf{z}\\
 & =-\textup{i}\frac{28\sqrt{3}}{135}\mathbf{z},
\end{align*}

\[
    \overline{\mathbf{z}}^{T}\mathbf{b}\left(\overline{\mathbf{z}},\left(\textup{i}\frac{7\sqrt{3}}{10}\mathbf{I}-\mathbf{A}\right)^{-1}\mathbf{b}\left(\mathbf{z},\mathbf{z}\right)\right)=-\textup{i}\frac{28\sqrt{3}}{135}
\]

Finally, the first Lyapunov coefficient of (\ref{sys:2}) is 
\begin{align*}
    l_{1}\left(0\right) & =\frac{10\sqrt{3}}{21}\Real\left(-\frac{14}{300}\left(10+7\textup{j}\right)-\textup{i}\frac{28\sqrt{3}}{135}\right)\\
    & =-\frac{10\sqrt{3}}{21}\times\frac{14}{300}\times\frac{13}{2}\\
    & =-\frac{13\sqrt{3}}{90}
\end{align*}
and, consequently, the limit cycle $\mathsf{C}_{\mu}$
is indeed locally asymptotically stable close to the bifurcation value.
\end{proof}

\subsection{Continuation of the limit cycle when the Hopf bifurcation theorem does not apply anymore}
\begin{proof}
    First, we show that, for any $\mu\geq 0$, the $\omega$-limit 
    set of (\ref{sys:2}) is, apart from $\mathbf{0}$, contained in the compact set
    \[
	\mathsf{I}=\left\{ \mathbf{v}\geq\mathbf{0}\ |\ 1\leq\frac{v_1+v_2+v_3}{3}\leq\frac{10}{3} \right\}.
    \]
    Using again the decomposition $\mathbf{v}=\alpha\mathbf{1}+\beta\mathbf{z}+\overline{\beta\mathbf{z}}$, this simply amounts to verifying that, for any $\alpha\in\left[ 0,1 \right]$, 
    \[
	\left( \mathbf{v}+\mu\mathbf{M}\mathbf{v}-\mathbf{C}\mathbf{v}\circ\mathbf{v} \right)\cdot\mathbf{1}\geq 0
    \]
    and, for any $\alpha\geq\frac{10}{3}$,
    \[
	\left( \mathbf{v}+\mu\mathbf{M}\mathbf{v}-\mathbf{C}\mathbf{v}\circ\mathbf{v} \right)\cdot\mathbf{1}\leq 0.
    \]
    Using again previous calculations, we end up with 
    \[
	\left( \mathbf{v}+\mu\mathbf{M}\mathbf{v}-\mathbf{C}\mathbf{v}\circ\mathbf{v} \right)\cdot\mathbf{1}=\alpha-\alpha^2+\frac{7}{30}\left|\beta\right|^2,
    \]
    which is obviously nonnegative if $\alpha\in\left[ 0,1 \right]$. Noticing the simple geometric
    fact that 
    \[
	\mathsf{T}_\alpha=\left\{\mathbf{v}\geq\mathbf{0}\ |\ v_1+v_2+v_3=3\alpha\right\}
    \]
    is an equilateral triangle of perimeter $9\sqrt{2}\alpha$ whose circumscribed circle is the
    boundary of the closed two-dimensional ball
    \[
	\overline{\mathsf{B}}_\alpha=\left\{\mathbf{v}\in\overline{\mathsf{B}\left( \alpha\mathbf{1},\sqrt{6}\alpha \right)}\ |\ v_1+v_2+v_3=3\alpha\right\},
    \]
    we deduce
    \begin{align*}
	\sqrt{6}\alpha & = \max_{\mathbf{v}\in\mathsf{T}_\alpha}\left|\mathbf{v}-\alpha\mathbf{1}\right| \\
	& = \max_{\mathbf{v}\in\overline{\mathsf{B}}_\alpha}\left|\mathbf{v}-\alpha\mathbf{1}\right| \\
	& = \max_{\mathbf{v}\in\overline{\mathsf{B}}_\alpha}\left|2\Real\left( \beta\mathbf{z} \right)\right| \\
	& = 2\max_{\mathbf{v}\in\overline{\mathsf{B}}_\alpha}\left(\left|\beta\right| \left|\Real\left( \textup{e}^{\textup{i}\arg\left( \beta \right)}\mathbf{z} \right)\right| \right) \\
	& = \frac{2}{\sqrt{3}}\max_{\mathbf{v}\in\overline{\mathsf{B}}_\alpha}\left|\beta\right| \max_{\theta\in\left[ 0,2\pi \right]}\sqrt{\cos\left( \theta \right)^2+\cos\left( \theta+\frac{2\pi}{3} \right)^2+\cos\left( \theta+\frac{4\pi}{3} \right)^2} \\
	& = \sqrt{2}\max_{\mathbf{v}\in\overline{\mathsf{B}}_\alpha}\left|\beta\right| \\
	& \geq \sqrt{2}\max_{\mathbf{v}\in\mathsf{T}_\alpha}\left|\beta\right|
    \end{align*}
    whence 
    $\alpha-\alpha^2+\frac{7}{30}\left|\beta\right|^2\leq\alpha-\frac{3}{10}\alpha^2$,
    which is indeed negative if $\alpha>\frac{10}{3}$ (and, having in mind that
    $10\mathbf{e}_1=\frac{10}{3}\mathbf{1}+2\Real\left( \frac{10}{3}\mathbf{z} \right)$
    is a steady state of the particular case $\mu=0$, this constant is optimal).

    Considering the dynamical system defined by (\ref{sys:2}) with initial conditions in the unstable 
    manifold of $\mathbf{1}$, we can reduce it to a two-dimensional flow whose $\omega$-limit set is
    also included in $\mathsf{I}$. Applying the Poincaré--Bendixson theorem, we deduce for any value of
    $\mu\in\left( 0,\mu_{\textup{H}} \right)$ the necessary existence of a positive limit cycle 
    $\mathsf{C}_\mu$ in $\mathsf{I}$.

    Notice that although a limit cycle that is
    locally asymptotically stable for the flow embedded in the unstable manifold of $\mathbf{1}$
    does exist, we do not, at this point, have any information on the stability of this limit cycle
    in the three-dimensional flow.
    
    Next, using the relative compactness (in the topology induced by the Hausdorff distance) 
    of any family 
    $\left( \mathsf{C}_\mu \right)_{\mu\in\left( 0,\mu_{\textup{H}}\right)}$, we can extract
    a limit point of it as $\mu\to 0$, say $\mathsf{C}$. Fixing an appropriate family
    of initial conditions, we easily derive the existence of a solution of (\ref{sys:2}) with
    $\mu=0$ whose full trajectory is contained in $\mathsf{C}$. The corresponding orbit is 
    a fixed point, a limit cycle, a heteroclinic connection or a homoclinic connection. 
    
    Since $\mathbf{1}$ does not bifurcate again at $\mu=0$, the case 
    $\mathsf{C}=\left\{ \mathbf{1} \right\}$ is discarded.

    The well-known characterization of the $\omega$-limit set of the three-component Lotka--Volterra
    competitive system corresponding to the case $\mu=0$ (see Zeeman 
    \cite[equivalence class n°27, p. 22]{Zeeman_1993}, Uno--Odani \cite{Uno_Odani_1997}, 
    May--Leonard \cite{May_Leonard_1975}, Petrovskii--Kawasaki--Takasu--Shigesada 
    \cite{Petrovskii_Kawasaki_Takasu_Shigesada}, etc.) shows then that $\mathsf{C}$ is 
    indeed a reunion of elements among $\left\{ 10\mathbf{e}_1 \right\}$, 
    $\left\{ 10\mathbf{e}_2 \right\}$, $\left\{ 10\mathbf{e}_3 \right\}$,
    $\mathsf{H}_1$, $\mathsf{H}_2$, $\mathsf{H}_3$ (where we recall that $\mathsf{H}_i$ is
    the heteroclinic orbit connecting $10\mathbf{e}_{i}$ and $10\mathbf{e}_{i+1}$). 
    In particular, the limiting system does not admit any periodic limit cycle. 

    In order to conclude, it only remains to prove that any limit cycle $\mathsf{C}_\mu$ encloses 
    $\vspan\left(\mathbf{1}\right)$, so that in the end $\mathsf{C}=\mathsf{C}_0$.
    To do so, we are going to show that the flow always crosses a plane containing the straight
    line $\vspan\left( \mathbf{1} \right)$ in the same direction, that is we are going
    to show that, for any $\mathbf{v}=\alpha\mathbf{1}+\beta\mathbf{z}+\overline{\beta\mathbf{z}}$ 
    with $\beta\neq 0$,
    \[
	\left( \mathbf{v}+\mu\mathbf{M}\mathbf{v}-\mathbf{C}\mathbf{v}\circ\mathbf{v} \right)\cdot\left( \textup{e}^{\textup{i}\frac{\pi}{2}}\beta\mathbf{z}+\overline{\textup{e}^{\textup{i}\frac{\pi}{2}}\beta\mathbf{z}} \right)
    \]
    has a constant sign.
    Using once more previous calculations, this amounts to finding the sign of
    \[
	\Real\left( -\textup{i}\overline{\beta}\left( \frac{7\sqrt{3}}{30}\overline{\textup{j}\beta^{2}}+\left(\alpha-\left(1-3\mu\right)+\frac{7}{10}\left(\frac{-1+\textup{i}\sqrt{3}}{2}\right)\alpha\right)\beta \right) \right),
    \]
    that is the sign of
    \[
	\Real\left( \textup{e}^{\textup{i}\frac{5\pi}{6}}\frac{7\sqrt{3}}{30}\overline{\beta^3}-\textup{i} \left(\alpha-\left(1-3\mu\right)+\frac{7}{10}\left(\frac{-1+\textup{i}\sqrt{3}}{2}\right)\alpha\right)\left|\beta\right|^2 \right),
    \]
    that is that of
    \begin{align*}
	\frac{7\sqrt{3}}{30}\left|\beta\right|\cos\left( \frac{5\pi}{6}-3\arg\left( \beta \right) \right)+\frac{7\sqrt{3}}{20}\alpha & =\frac{7\sqrt{3}}{10}\left( \frac{\left|\beta\right|}{3} \cos\left( \frac{5\pi}{6}-3\arg\left( \beta \right) \right)+\frac{\alpha}{2}\right) \\
	& =\frac{7\sqrt{3}}{10}\left( -\frac{\left|\beta\right|}{3} \cos\left( \frac{\pi}{6}+3\arg\left( \beta \right) \right)+\frac{\alpha}{2}\right) 
    \end{align*}
    The estimate $\left|\beta\right|\leq\sqrt{3}\alpha$ is this time not precise enough; we
    truly need to relate the modulus of $\beta$ and its argument. 

    By periodicity and invariance by rotation around the axis 
    $\vspan\left( \mathbf{1} \right)$, it suffices to consider an interval of length
    $\frac{2\pi}{3}$ for the parameter $\theta=\arg\left( \beta \right)$. For instance, we 
    take the interval $\left[ \frac{2\pi}{3},\frac{4\pi}{3} \right]$. In this interval, 
    $\mathsf{T}_\alpha$ is characterized by the inequality $v_1\geq 0$, 
    which reads $\alpha+2\left|\beta\right|\cos\theta\geq 0$.
    Consequently, the studied sign is nonnegative provided 
    \[
	3\left|\cos\theta\right|\geq\left|\cos\left( 3\theta+\frac{\pi}{6} \right)\right|\text{ for all }\theta\in\left[ \frac{2\pi}{3},\frac{4\pi}{3} \right],
    \]
    which is obviously true.

    Therefore the flow is rotating clockwise around $\vspan\left( \mathbf{1} \right)$
    if seen from $\mathbf{0}$ (consistently with \figref{limit_cycle}), and so is any periodic orbit.
    Thus any limit point $\mathsf{C}$ satisfies indeed $\mathsf{C}=\mathsf{C}_0$, whence any
    full family $\left( \mathsf{C}_\mu \right)_{\mu\in\left( 0,\mu_{\textup{H}} \right)}$
    converges as $\mu\to 0$ to $\mathsf{C}_0$.
\end{proof}

It might be tempting to use the same ideas to localize more efficiently, and maybe even 
count, the limit cycles. However, the sign of
\[
    \left( \mathbf{v}+\mu\mathbf{M}\mathbf{v}-\mathbf{C}\mathbf{v}\circ\mathbf{v} \right)\cdot\left( \beta\mathbf{z}+\overline{\beta\mathbf{z}} \right)
\]
is the same as the sign of
\[
    \frac{60}{13}\left( \mu_{\textup{H}}-\mu \right)-\left( \alpha-1 \right)-\frac{14\sqrt{3}}{39}\cos\left( 3\arg\left( \beta \right)+\frac{2\pi}{3} \right)\left|\beta\right|.
\]
Given a fixed angle $\arg\beta\in\left[ 0,2\pi \right]$, the nullcline is a straight line 
in the plane of coordinates $\left( \alpha-1,\left|\beta\right| \right)$ whose slope is 
$-\frac{14\sqrt{3}}{39}\cos\left( 3\arg\beta+\frac{2\pi}{3} \right)$. Unfortunately the sign 
of this slope varies as $\arg\beta$ varies. Hence the best, and really unsatisfying, result
we can deduce from this is that any limit cycle is in the region of the phase space
where 
\[
    \frac{60}{13}\left( \mu_{\textup{H}}-\mu \right)-\left( \alpha-1 \right)\in\left[-\frac{14\sqrt{3}}{39}\left|\beta\right|,\frac{14\sqrt{3}}{39}\left|\beta\right|\right].
\]

\section{Numerical findings}

In this section, $\mu=\frac{13}{120}\in\left(\mu_{-},\mu_{\textup{H}}\right)$ is fixed.

\subsection{The numerical scheme}
All the forthcoming plots are obtained thanks to a simple finite difference scheme, 
explicit in time and with Neumann boundary conditions on the boundary of a very large spatial
interval. It is well-known that such a spatial domain approximates correctly $\mathbb{R}$,
at least regarding spreading properties of reaction--diffusion systems and equations. 
Indeed, the forthcoming results are consistent with previously known
theoretical results (such as, for instance, the fact that initially compactly supported solutions
for (\ref{sys:1}) invade $\mathbf{0}$ at speed $2$ or the exponential decay of traveling wave solutions \cite{Girardin_2016_2,Girardin_2017}).

Source codes are run in \textit{Octave} \cite{Octave}. 

The findings seem to be robust with respect to the numerical parameters.

\subsection{The limit cycle for (\ref{sys:2})}
Although we do not know how to prove analytically the global attractivity or the uniqueness of the limit cycle 
$\mathsf{C}_\mu$, numerically it seems indeed to be true, as illustrated by
\figref{limit_cycle}.

\begin{figure}
    \resizebox{\textwidth}{!}{\input{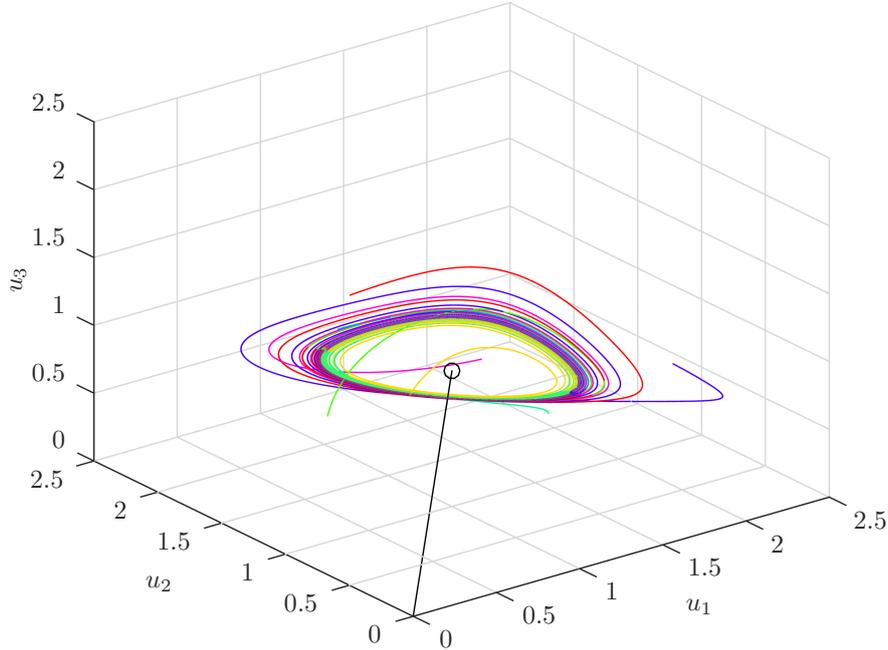}}
    \caption{\label{fig:limit_cycle} Seven trajectories of (\ref{sys:2}) with random initial conditions.}
\end{figure}

\subsection{The Cauchy problem with diffusion \label{subsec:numerics_diffusion}}
The following findings seem to be robust with respect to the initial condition $\mathbf{u}_0$, 
as soon as it is compactly supported, nonzero and not in $\vspan(\mathbf{1})$ 
(stable manifold of $\mathbf{1}$). For instance, we fix 
$\mathbf{u}_0=(1.01,1.01,0.99)^T$ in a small interval in the center of the domain
and $\mathbf{u}_0=\mathbf{0}$ elsewhere.

\begin{figure}
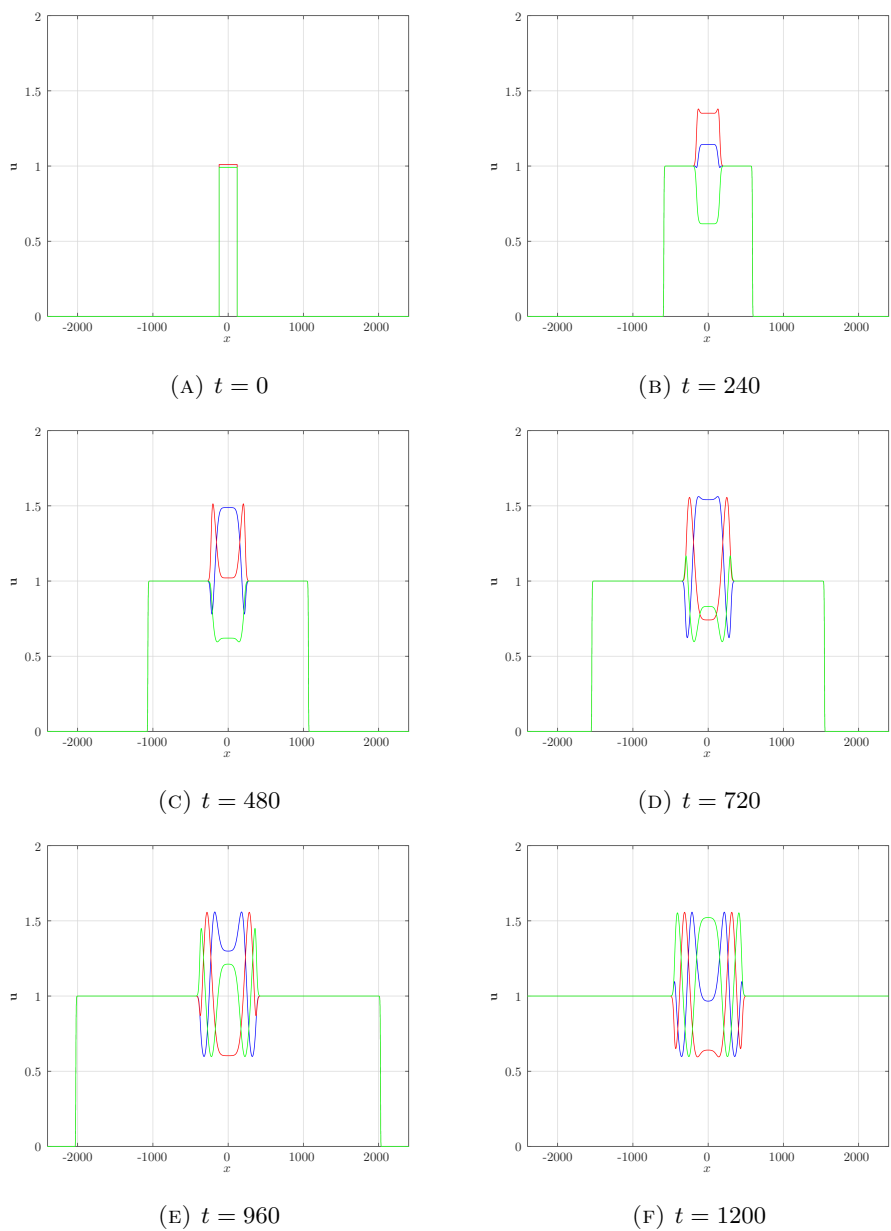

    \begin{subfigure}{.49\textwidth}
	\resizebox{\textwidth}{!}{\input{evolution_0.tex}}
	\caption{$t=0$}
    \end{subfigure}
    \begin{subfigure}{.49\linewidth}
	\resizebox{\textwidth}{!}{\input{evolution_2.tex}}
	\caption{$t=240$}
    \end{subfigure}
    \begin{subfigure}{.49\linewidth}
	\resizebox{\textwidth}{!}{\input{evolution_4.tex}}
	\caption{$t=480$}
    \end{subfigure}
    \begin{subfigure}{.49\linewidth}
	\resizebox{\textwidth}{!}{\input{evolution_6.tex}}
	\caption{$t=720$}
    \end{subfigure}
    \begin{subfigure}{.49\linewidth}
	\resizebox{\textwidth}{!}{\input{evolution_8.tex}}
	\caption{$t=960$}
    \end{subfigure}
    \begin{subfigure}{.49\linewidth}
	\resizebox{\textwidth}{!}{\input{evolution_10.tex}}
	\caption{$t=1200$}
    \end{subfigure}
    \caption{\label{fig:evolution} Snapshots of the Cauchy problem.}
\end{figure}

Once the existence of a pulsating front connecting $\mathbf{1}$ to a wave
train $(t,x)\mapsto\mathbf{p}_\gamma\left( \kappa_\gamma x - \sigma_\gamma t \right)$
is observed (see \figref{evolution}), we
use the phase space to estimate the amplitude $\gamma$ of the wave train.
In order to do so, we plot in \figref{propagation_of_oscillations} the 
trajectory of $t\mapsto\mathbf{u}(t,x)$ with $x$ appropriatly chosen 
(say, away from the initial support of the solution but within the final 
support of the wave train)
together with $\mathsf{C}$ (obtained by truncating
any trajectory of (\ref{sys:2}), see \figref{limit_cycle}).
This confirms that $\gamma$ is smaller than, but close to, $1$. As a side
note, this also confirms that the selected wave train is a stable one (in
the sense of \propref{stability_wave_trains}). 

\begin{figure}
    \resizebox{\textwidth}{!}{\input{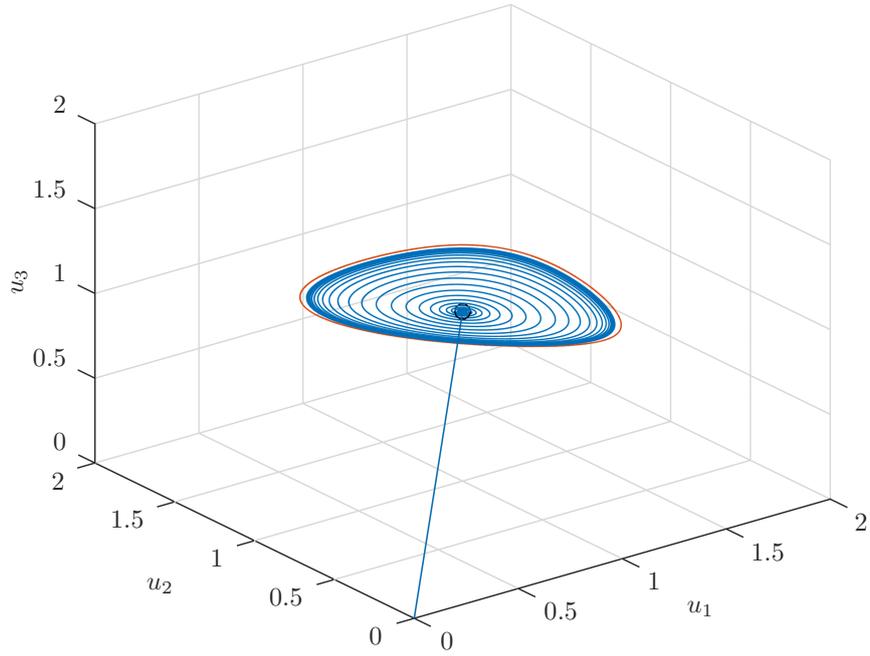}}
    \caption{\label{fig:propagation_of_oscillations} In blue, the trajectory at $x$. In red, the
    limit cycle $\mathsf{C}$.}
\end{figure}

To evaluate the speed of the pulsating front, the most convenient way is to plot an appropriate
level set. Since the three components of $\mathbf{u}$ always spread together, it is sufficient to
plot the level set of only one component, say $u_1$. Of course, the value $U$ of $u_1$ at this 
level set must satisfy $0<U<1$, so for instance we fix $U=0.9$. We obtain then \figref{level_sets}.

With \figref{level_sets}, we can verify that the invasion $\mathbf{1}\to\mathbf{0}$ occurs 
at speed $2$ and
we can evaluate graphically that the invasion $\mathbf{p}_\gamma\to\mathbf{1}$ occurs
at speed $c\simeq\frac{1}{3}$, which corresponds to the linear prediction of Sherratt
\cite{Sherratt_1998}:
\[
    c_{\textup{lin}}=2\sqrt{\Real\left(\lambda_{\frac{13}{120}}\right)}=2\sqrt{\frac{3}{120}}=\frac{1}{\sqrt{10}}\simeq 0.3162.
\]

Let us point out that Sherratt's prediction uses the parameter $\lambda_0$ of the $\lambda$-$\omega$
form of the system instead of the real part of the bifurcating eigenvalues and that
the equality is perhaps not obvious. As explained earlier, the $\lambda$-$\omega$ reduction
is not performed in the present paper, but this is in fact unnecessary as far as the speed
$c_{\textup{lin}}$ 
is concerned. Indeed, to obtain the linear part of the $\lambda$-$\omega$ reduction,
it suffices to notice that in the orthogonal basis of $\mathbb{R}^3$ 
\[
    \left( \mathbf{1},\mathbf{z}+\overline{\mathbf{z}},\textup{i}\left( \mathbf{z}-\overline{\mathbf{z}} \right) \right)
    = \left( \mathbf{1},
    \frac{1}{\sqrt{3}}\begin{pmatrix}
	2 \\ -1 \\ -1
    \end{pmatrix},
    \begin{pmatrix}
	0 \\ -1 \\ 1
    \end{pmatrix}
    \right),
\]
$\mu\mathbf{M}-\mathbf{C}$ reads
\[
    \begin{pmatrix}
	-1 & 0 & 0 \\
	0 & 3\left( \frac{7}{60}-\mu \right) & \frac{7\sqrt{3}}{20} \\
	0 & -\frac{7\sqrt{3}}{20} & 3\left( \frac{7}{60}-\mu \right)
    \end{pmatrix}.
\]
Hence the parameters $\lambda_0$ and $\omega_0$ of the $\lambda$-$\omega$ normal form
are indeed the real and imaginary parts of one of the two bifurcating
eigenvalues, namely 
\[
    \lambda_0 = 3\left( \frac{7}{60}-\mu \right)\text{ and }\omega_0 = -\frac{7\sqrt{3}}{20}.
\]

We also see on \figref{level_sets} that the intrinsic speed $c_\gamma$ of the wave train is
negative and, as expected, of large absolute value.

\begin{figure}
    \resizebox{\textwidth}{!}{\input{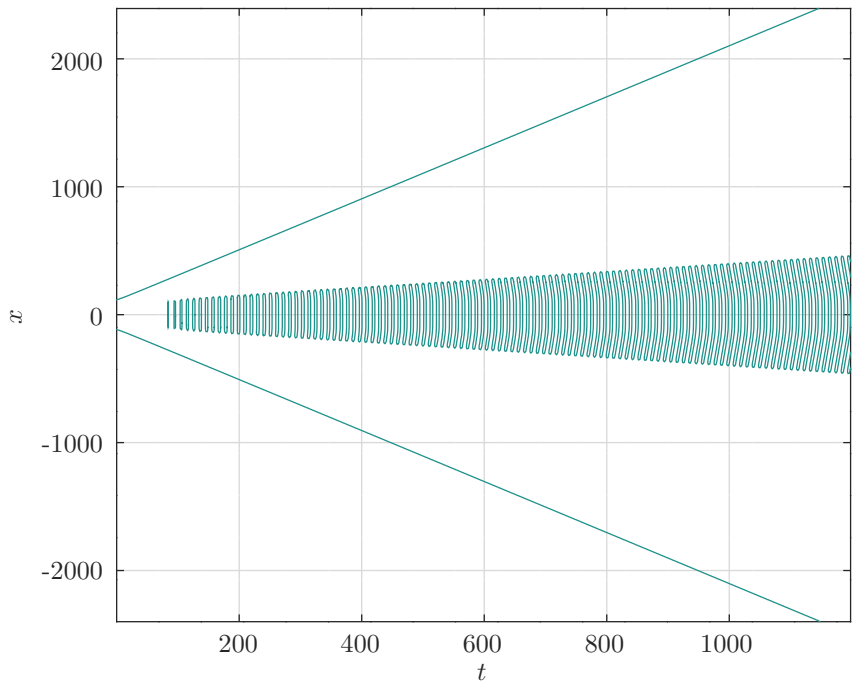}}
    \caption{\label{fig:level_sets} 0.9 level sets of $u_1$.}
\end{figure}

\section*{Acknowledgments}
The author wishes to thank two anonymous referees whose comments helped to improve the clarity of the manuscript.

\appendix
\section{Proof of \propref{stability_wave_trains}\label{app:WT}}

\begin{proof}
    As explained by Maginu \cite{Maginu_1981}, in order to establish the marginal stability in
    linearized criterion of wave trains $\mathbf{p}_\gamma$ with an amplitude $\gamma$ sufficiently
    close to 1, it suffices to prove the strong stability of the spatially homogeneous 
    limit cycle $\mathbf{p}_1$.
    Roughly speaking, the strong stability in the sense of Maginu is the linear stability 
    with respect to spatio-temporal perturbations of the form 
    $\sin\left( \omega x \right)\mathbf{u}(t)$.

    More precisely, the strong stability of the spatially homogeneous limit cycle $\mathsf{C}_\mu$ 
    for (\ref{sys:1}) is defined by Maginu \cite{Maginu_1981} as the negativity of all Floquet 
    exponents of all systems
    \begin{equation}
	\dot{\mathbf{u}}(t)=-\omega^2\mathbf{u}(t)+\mathbf{A}(t)\mathbf{u}(t)\text{ with }\omega\in\mathbb{R},
	\label{sys:strong_stability_limit_cycle}
    \end{equation}
    where $t\mapsto\mathbf{A}(t)$ is the linearization of 
    $\mathbf{v}\mapsto\mathbf{v}+\mu\mathbf{M}\mathbf{v}-\left(\mathbf{C}\mathbf{v}\right)\circ\mathbf{v}$
    evalued at $\mathbf{p}_1$ (which is the periodic profile corresponding to the limit cycle).

    Let $\mathbf{U}_\omega$ be the fundamental solution associated with 
    (\ref{sys:strong_stability_limit_cycle}), namely the solution of 
    \begin{equation*}
	\begin{cases}
	    \dot{\mathbf{U}}(t)=-\omega^2\mathbf{U}(t)+\mathbf{A}(t)\mathbf{U}(t),\\
	    \mathbf{U}(0)=\mathbf{I}.
	\end{cases}
    \end{equation*}
    It is easily verified that $t\mapsto\textup{e}^{\omega^2 t}\mathbf{U}_\omega(t)$ is
    exactly $\mathbf{U}_0$. Therefore the Floquet exponents 
    $\left( \eta_i\left( \omega \right) \right)_{i\in\{1,2,3\}}$ of
    (\ref{sys:strong_stability_limit_cycle}) satisfy exactly 
    \[
	\left( \eta_i\left( \omega \right) \right)_{i\in\{1,2,3\}}=\left( \eta_i\left( 0 \right)-\omega^2 \right)_{i\in\{1,2,3\}}.
    \]
    The negativity of the family $\left( \eta_i\left( 0 \right) \right)_{i\in\{1,2,3\}}$ leads 
    to the conclusion.
    \end{proof}

\section{A remark on the entire solutions of two-component KPP systems with 
    Lotka--Volterra competition set in a Euclidean space\label{app:ES}}

In this section we will use the terminology ``eventually cooperative'', ``eventually competitive'' 
and ``mixed type''. It refers to the trichotomy of Cantrell--Cosner--Yu 
\cite[Figure 1, Proposition 2.5]{Cantrell_Cosner_Yu_2018}.

As a preliminary, we point out a result that was just hinted in Cantrell--Cosner--Yu 
\cite{Cantrell_Cosner_Yu_2018}: in the eventually competitive, bistable case, 
we can use classical arguments (unstable manifold theorem, 
Bendixson--Dulac theorem, Poincaré--Bendixson theorem) to show that, 
exactly as in the Lotka--Volterra case, there exists a partition 
$\left(\mathsf{B}_1,\mathsf{S},\mathsf{B}_2\right)$ of $\left[0,+\infty\right)^2$ such that
each $\mathsf{B}_i$ is the basin of attraction of a stable steady state whereas 
the separatrix $\mathsf{S}$ is the basin of attraction of the nonzero unstable steady
state. The separatrix is smooth, contains $\mathbf{0}$ and is, in the competitive rectangle, 
the graph of a nondecreasing function.

\begin{prop}
    \label{prop:2_compo_convergence} Let $n\in\mathbb{N}$, 
    $\mathbf{D}$ be a diagonal $2\times 2$ matrix with positive
    diagonal entries, $\mathbf{L}$ be a $2\times 2$ essentially nonnegative and 
    irreducible matrix satisfying $\lambda_{\textup{PF}}\left( \mathbf{L} \right)>0$, $\mathbf{C}$ 
    be a $2\times 2$ positive matrix and $\mathbf{u}$ be an entire solution of 
    \[
	\partial_t \mathbf{u} - \mathbf{D}\Delta \mathbf{u} = \mathbf{L}\mathbf{u} - \mathbf{C}\mathbf{u}\circ\mathbf{u}
    \]
    satisfying
    \[
	\min_{i\in\{1,2\}}\left( \inf_{(t,x)\in\mathbb{R}\times\mathbb{R}^n}u_i(t,x) \right)>0.
    \]
    
    Then $\mathbf{u}$ is a constant steady state provided one of the following conditions holds true:
    \begin{enumerate}
	\item the system is eventually cooperative;
	\item the system is eventually competitive and monostable;
	\item the system is eventually competitive, bistable and there exists $t\in\mathbb{R}$
	    such that the image of $x\mapsto\mathbf{u}(t,x)$ does not intersect the separatrix;
	\item the system is of mixed type and $\mathbf{D}=\mathbf{I}$;
	\item the system is of mixed type and $\mathbf{u}$ is spatially periodic;
    \end{enumerate}
\end{prop}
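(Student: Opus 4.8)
The plan is to transfer the diffusionless dynamics, entirely settled by Cantrell--Cosner--Yu \cite{Cantrell_Cosner_Yu_2018} and recalled in the preliminary remark, to the entire solution $\mathbf{u}$ by comparison arguments, exploiting crucially that $\mathbf{u}$ is defined for all $t\in\mathbb{R}$ and uniformly bounded away from $\mathbf{0}$. First I would record the a priori estimates: the lower bound is exactly the hypothesis, while a uniform upper bound follows from the KPP (competition-dominated) structure of $\mathbf{L}\mathbf{u}-\mathbf{C}\mathbf{u}\circ\mathbf{u}$, which makes a large rectangle $[0,R]^2$ globally attracting and forward-invariant for the diffusionless flow and, by the invariant-region principle, for the parabolic problem as well; an entire solution is therefore trapped in a fixed compact subset $\mathsf{K}\subset(0,+\infty)^2$. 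Consequently the spatially constant diffusionless trajectories issued from points of $\mathsf{K}$ are admissible sub- and supersolutions.

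The two globally monotone cases are then routine. When the system is eventually cooperative (1), the natural order $\leq$ makes the parabolic problem monotone; when it is eventually competitive and monostable (2), the same holds for the competitive order $(v_1,v_2)\preceq_K(w_1,w_2)\iff v_1\leq w_1,\ v_2\geq w_2$. In either case let $\mathbf{a}$ and $\mathbf{b}$ be the order-maximal and order-minimal corners of the smallest rectangle containing the range of $\mathbf{u}$, so that $\mathbf{b}\preceq\mathbf{u}(t,x)\preceq\mathbf{a}$ for every $(t,x)$. Fixing $(t,x)$ and an earlier time $t_0<t$, the comparison principle sandwiches $\mathbf{u}(t,x)$ between the two diffusionless trajectories issued from $\mathbf{a}$ and $\mathbf{b}$ at time $t_0$; since the system is monostable, both converge to the unique positive steady state $\mathbf{v}^{*}$ as $t-t_0\to+\infty$, so letting $t_0\to-\infty$ forces $\mathbf{u}(t,x)=\mathbf{v}^{*}$. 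For the mixed-type, spatially periodic case (5), where no global order is available, I would instead read the problem on the periodicity torus: this is a compact manifold, so Cantrell--Cosner--Yu's global attractivity applies verbatim, the only interior compact invariant set is $\{\mathbf{v}^{*}\}$, and an entire solution whose $\alpha$-limit set reduces to the locally asymptotically stable $\mathbf{v}^{*}$ must lie in its (trivial) unstable manifold, hence be constant.

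The delicate cases are the eventually competitive bistable case (3) and the mixed-type equal-diffusion case (4). For (3), connectedness of $\{\mathbf{u}(t,x):x\in\mathbb{R}^n\}$ together with the avoidance of the separatrix $\mathsf{S}$ places this image entirely in one basin, say $\mathsf{B}_1$; since $\mathsf{B}_1$ is, by the preliminary remark, an up-set for $\preceq_K$ bounded by the nondecreasing separatrix, its order-maximal corner $\mathbf{a}$ still lies in $\mathsf{B}_1$ and the upper barrier flows to the stable state $\mathbf{p}_1$. The obstruction is that the order-minimal corner $\mathbf{b}$ may lie across $\mathsf{S}$, inside $\mathsf{B}_2$, so the two-sided squeeze of cases (1)--(2) collapses only to $\mathbf{p}_2\preceq_K\mathbf{u}\preceq_K\mathbf{p}_1$. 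To close the gap I would show that the orbit is eventually trapped in a small order-box around $\mathbf{p}_1$ contained in $\mathsf{B}_1$ --- using the local stability of the node $\mathbf{p}_1$ together with the forward barrier --- and then restart the comparison inside that box, where both corners now flow to $\mathbf{p}_1$. For (4), where the off-diagonal signs are mixed and no comparison principle survives, I would exploit $\mathbf{D}=\mathbf{I}$ through invariant-region theory: composing $\mathbf{u}$ with a convex Lyapunov function $V$ of the diffusionless flow yields, thanks to the equal diffusion cancelling the Hessian term with the right sign, a subsolution of the heat equation, so that $t\mapsto\sup_x V(\mathbf{u}(t,x))$ is nonincreasing; a LaSalle argument on the $\alpha$-limit set then pins $\mathbf{u}$ to $\mathbf{v}^{*}$.

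I expect the genuine difficulty to be concentrated in cases (3) and (4): securing the matching lower bound across the separatrix in the bistable case, that is the trapping step, which is precisely where mere monotonicity is insufficient; and, in the mixed equal-diffusion case, producing a Lyapunov function that is simultaneously convex and adapted to the diffusionless flow, since it is exactly this convexity that the equal-diffusion hypothesis $\mathbf{D}=\mathbf{I}$ is present to exploit.
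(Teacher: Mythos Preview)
Your approach is essentially the paper's: sandwich by spatially constant diffusionless trajectories in the ordered cases (1)--(3), and a convex Lyapunov function combined with the heat operator in case (4). Two points of comparison are worth noting. First, the convex Lyapunov function you anticipate having to construct in case (4) is not a difficulty: it is supplied ready-made by Cantrell--Cosner--Yu \cite[Lemma~3.2]{Cantrell_Cosner_Yu_2018} in the separable form $V=c_1 F_1(u_1)+c_2 F_2(u_2)$, and the paper simply quotes it. Second, for case (5) the paper does \emph{not} pass to the torus and invoke global attractivity; instead it reuses the very same $V$ and integrates over a spatial period, which kills the boundary terms and, because $V$ is separable (so $D^2V$ is diagonal), produces a nonincreasing functional even for general diagonal $\mathbf{D}$. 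Your torus argument is a legitimate alternative, but the paper's route is more uniform with case (4) and avoids checking that the bounded-domain results of \cite{Cantrell_Cosner_Yu_2018} transfer to the periodic setting. As for case (3), the paper's proof is literally ``same as before'', so the corner-across-the-separatrix issue you raise and your proposed trapping step are, if anything, more careful than what the paper writes down.
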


\begin{proof}\ 
    \begin{enumerate}
	\item If the system is eventually cooperative, then necessarily $\mathbf{u}$ is valued
    in the cooperative rectangle and, by comparison with a solution that does not depend on $x$ and
    the fact that the unique steady state $\mathbf{v}^\star$ is globally attractive for the 
    diffusionless system, $\mathbf{u}$ is exactly $\mathbf{v}^\star$, which is constant.
	\item Same as before.
	\item Same as before.
	\item If the system is of mixed type and $\mathbf{D}=\mathbf{I}$, then by using 
	    the Lyapunov function $V=c_1 F_1\left( u_1 \right)+c_2 F_2\left( u_2 \right)$ of 
            \cite[Lemma 3.2]{Cantrell_Cosner_Yu_2018}, we find
            \begin{align*}
                \partial_t V - \Delta V & = -\nabla\mathbf{u}^T.\textup{D}^2 V.\nabla\mathbf{u} + \nabla V\cdot\left( \partial_t \mathbf{u}-\Delta\mathbf{u} \right)\\
                & = -\nabla\mathbf{u}^T.\textup{D}^2 V.\nabla\mathbf{u} + \nabla V\cdot\left( \mathbf{L}\mathbf{u}-\mathbf{C}\mathbf{u}\circ\mathbf{u} \right)
            \end{align*}
            and then, from the convexity of $V$, it follows again that $\mathbf{u}$ has to be 
            the unique steady state $\mathbf{v}^\star$, which is constant.
	\item Same as before except we use as Lyapunov function the integral of $V$ over a spatial
	    period.
    \end{enumerate}
\end{proof}

Recalling that the profile of a traveling wave connects, in some sense, $\mathbf{0}$ to an entire
solution of the system satisfying the positivity condition above, we deduce directly various 
sufficient conditions for the convergence of the profile. In particular, all profiles of a 
monostable system with weak mutations \cite{Griette_Raoul,Morris_Borger_Crooks} converge to 
the stable state indeed. 
This is a new step toward the resolution of a conjecture presented in an earlier work 
\cite[Conjecture 1.1]{Girardin_2017} (still, let us emphasize that the bistable case remains
largely open).

Notice that the same conditions also guarantee that the nonnegative nonzero solutions 
of the Cauchy problem converge locally uniformly to a constant steady state. In particular,
two-component systems with weak mutation rates, equal diffusion rates and a unique positive 
constant steady state satisfy this convergence property. This is of course in striking contrast 
with the three-component counter-example that is the main point of the present paper.

\section{A remark on unstable constant positive steady states of KPP systems with Lotka--Volterra competition
    \label{app:USS}}
\begin{prop}
    \label{prop:unstable_steady_state} Let $N\in\mathbb{N}$ such that $N\geq2$, 
    $\mathbf{L}$ be an $N\times N$ 
    essentially nonnegative and irreducible matrix, $\mathbf{C}$ be a $N\times N$ positive matrix
    and $\mathbf{v}\in\mathbb{R}^N$ be a positive solution of 
    $\mathbf{L}\mathbf{v}=\mathbf{C}\mathbf{v}\circ\mathbf{v}$.

    Assume that $\mathbf{v}$ is unstable, in the sense that at least one eigenvalue of the 
    linearized operator 
    $\mathbf{L}_\mathbf{v}=\mathbf{L}-\diag(\mathbf{v})\mathbf{C}-\diag(\mathbf{C}\mathbf{v})$
    has a nonnegative real part.

    Then $\mathbf{L}_\mathbf{v}$ is not essentially nonnegative.
\end{prop}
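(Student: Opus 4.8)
The plan is to argue by contradiction: I assume that $\mathbf{L}_\mathbf{v}$ \emph{is} essentially nonnegative and show that this forces $\mathbf{v}$ to be linearly stable, contradicting the hypothesis. The starting point is a direct computation exploiting the steady-state relation. Since $\mathbf{C}\mathbf{v}\circ\mathbf{v}=\diag(\mathbf{v})\mathbf{C}\mathbf{v}=\diag(\mathbf{C}\mathbf{v})\mathbf{v}$, the equation $\mathbf{L}\mathbf{v}=\mathbf{C}\mathbf{v}\circ\mathbf{v}$ gives simultaneously $\mathbf{L}\mathbf{v}=\diag(\mathbf{v})\mathbf{C}\mathbf{v}$ and $\mathbf{L}\mathbf{v}=\diag(\mathbf{C}\mathbf{v})\mathbf{v}$. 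Substituting into the definition of $\mathbf{L}_\mathbf{v}$, the three terms telescope and I obtain
\[
\mathbf{L}_\mathbf{v}\mathbf{v}=\mathbf{L}\mathbf{v}-\diag(\mathbf{v})\mathbf{C}\mathbf{v}-\diag(\mathbf{C}\mathbf{v})\mathbf{v}=-\mathbf{v}\circ(\mathbf{C}\mathbf{v}).
\]
Because $\mathbf{v}$ is positive and $\mathbf{C}$ is a positive matrix, the vector $\mathbf{C}\mathbf{v}$ is positive, hence $\mathbf{L}_\mathbf{v}\mathbf{v}=-\mathbf{v}\circ(\mathbf{C}\mathbf{v})$ is \emph{strictly} negative componentwise. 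Thus $\mathbf{v}$ is a positive vector on which $\mathbf{L}_\mathbf{v}$ acts negatively.

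Next I would invoke the Perron--Frobenius theory for essentially nonnegative (Metzler) matrices. Choosing $c>0$ large enough that $\mathbf{L}_\mathbf{v}+c\mathbf{I}$ is a nonnegative matrix, the Perron--Frobenius theorem applied to $\mathbf{L}_\mathbf{v}+c\mathbf{I}$ (and to its transpose, which is again essentially nonnegative) shows that the spectral abscissa $s\left(\mathbf{L}_\mathbf{v}\right)=\max\{\Real\lambda : \lambda\in\mathrm{Sp}(\mathbf{L}_\mathbf{v})\}$ is itself a real eigenvalue of $\mathbf{L}_\mathbf{v}$ and admits a nonnegative nonzero \emph{left} eigenvector $\mathbf{w}$, namely $\mathbf{w}^T\mathbf{L}_\mathbf{v}=s\left(\mathbf{L}_\mathbf{v}\right)\mathbf{w}^T$ with $\mathbf{w}\geq\mathbf{0}$ and $\mathbf{w}\neq\mathbf{0}$.

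Pairing this eigenvalue relation with $\mathbf{v}$ closes the argument:
\[
s\left(\mathbf{L}_\mathbf{v}\right)\,\mathbf{w}^T\mathbf{v}=\mathbf{w}^T\mathbf{L}_\mathbf{v}\mathbf{v}=-\mathbf{w}^T\left(\mathbf{v}\circ(\mathbf{C}\mathbf{v})\right).
\]
Here $\mathbf{w}^T\mathbf{v}>0$ since $\mathbf{w}\geq\mathbf{0}$ is nonzero and $\mathbf{v}>\mathbf{0}$, while the right-hand side is strictly negative for the same reason; hence $s\left(\mathbf{L}_\mathbf{v}\right)<0$. By definition of the spectral abscissa, every eigenvalue of $\mathbf{L}_\mathbf{v}$ then has negative real part, which contradicts the instability assumption. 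This contradiction shows that $\mathbf{L}_\mathbf{v}$ cannot be essentially nonnegative.

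The step I expect to require the most care is the second one: the precise form of Perron--Frobenius needed here is the statement that the spectral abscissa of an arbitrary (possibly reducible) Metzler matrix is a genuine real eigenvalue with a nonnegative eigenvector, for both the matrix and its transpose. An equivalent and perhaps cleaner packaging avoids left eigenvectors altogether: the first step shows that $-\mathbf{L}_\mathbf{v}$ is a Z-matrix satisfying $\left(-\mathbf{L}_\mathbf{v}\right)\mathbf{v}>\mathbf{0}$ with $\mathbf{v}>\mathbf{0}$, i.e. it is semipositive, so $-\mathbf{L}_\mathbf{v}$ is a nonsingular M-matrix and all its eigenvalues have positive real part, giving the same conclusion. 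Note that irreducibility of $\mathbf{L}$ is not actually used in this argument, only the positivity of $\mathbf{C}$ and $\mathbf{v}$.
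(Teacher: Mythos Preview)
Your proof is correct and follows essentially the same approach as the paper: compute $\mathbf{L}_\mathbf{v}\mathbf{v}=-\mathbf{C}\mathbf{v}\circ\mathbf{v}<\mathbf{0}$, then use Perron--Frobenius theory for essentially nonnegative matrices to conclude that the spectral abscissa is negative, contradicting instability. Your version spells out the Perron--Frobenius step more explicitly (via the left eigenvector pairing, with the M-matrix reformulation as an alternative) where the paper simply cites standard properties, and your remark that irreducibility of $\mathbf{L}$ is not actually used is a valid observation.
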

\begin{proof}
    By definition of $\mathbf{v}$, $\mathbf{L}_\mathbf{v}\mathbf{v}=-\mathbf{C}\mathbf{v}\circ\mathbf{v}$.
    This vector is obviously negative. Assuming by contradiction that 
    $\mathbf{L}_\mathbf{v}$ is essentially nonnegative, we deduce by standard
    properties of essentially nonnegative matrices (\textit{e.g.}, \cite{Horn_Johnson_1})
    that the Perron--Frobenius eigenvalue of $\mathbf{L}_\mathbf{v}$, whose real part is maximal 
    among the eigenvalues, is negative. The instability of $\mathbf{v}$ is contradicted. 
\end{proof}

With more general competition terms $\mathbf{c}(\mathbf{v})$
\cite{Girardin_2017,Girardin_2016_2}, the same proof will work provided 
$\textup{D}\mathbf{c}(\mathbf{v}).\mathbf{v}$ is nonnegative. This is a fairly general
assumption, reminiscent of the known condition for the existence of traveling waves 
\cite[Theorem 1.5]{Girardin_2016_2}.

We also point out that the same simple observation ($\mathbf{L}_\mathbf{v}\mathbf{v}$ is negative)
yields other interesting properties, for instance the existence of an eigenvalue with negative real part
in the case where $\mathbf{L}_\mathbf{v}$ is symmetric.

\bibliographystyle{plain}
\bibliography{ref}

\end{document}